\title{The Sobolev extension problem on trees and in the plane}
\author{Jacob Carruth and Arie Israel}
\date{June 2024}
\newcommand{\cC}{\mathcal{C}}
\newcommand{\cM}{\mathcal{M}}
\newcommand{\cW}{\mathcal{W}}
\newcommand{\cQ}{\mathcal{Q}}
\newcommand{\cS}{\mathcal{S}}
\newcommand{\cH}{\mathcal{H}}
\newcommand{\R}{\mathbb{R}}
\newcommand{\Z}{\mathbb{Z}}
\newcommand{\diam}{\mathrm{diam}}
\newcommand{\touch}{\leftrightarrow}
\newcommand{\dist}{\mathrm{dist}}
\newcommand{\supp}{\mathrm{supp}}
\newcommand{\lca}{\mathrm{lca}}
\newcommand\extrafootertext[1]{%
    \bgroup
    \renewcommand\thefootnote{\fnsymbol{footnote}}%
    \renewcommand\thempfootnote{\fnsymbol{mpfootnote}}%
    \footnotetext[0]{#1}%
    \egroup
}
\newtheorem{thm}{Theorem}
\newtheorem{lem}{Lemma}
\newtheorem{defn}{Definition}
\begin{document}

\maketitle

\begin{abstract}
    Let $V$ be a finite tree with radially decaying weights. We show that there exists a set $E \subset \R^2$ for which the following two problems are equivalent: (1) Given a (real-valued) function $\phi$ on the leaves of $V$, extend it to a function $\Phi$ on all of $V$ so that $||\Phi||_{L^{1,p}(V)}$ has optimal order of magnitude. Here, $L^{1,p}(V)$ is a weighted Sobolev space on $V$. (2) Given a function $f:E \rightarrow \R$, extend it to a function $F \in L^{2,p}(\R^2)$ so that $||F||_{L^{2,p}(\R^2)}$ has optimal order of magnitude.
\end{abstract}

\section{Introduction}\label{sec: intro}

Let $L^{m,p}(\R^n)$ denote the homogeneous Sobolev space of real-valued functions on $\R^n$ whose (distributional) derivatives of order $m$ belong to $L^p(\R^n)$ for $1< p< \infty$. This space is equipped with the seminorm
\[
||F||_{L^{m,p}(\R^n)} = \bigg( \int_{\R^n} | \nabla^m F(x)|^p \ dx \bigg)^{1/p}.
\]
Provided $p > n/m$,  any $F \in L^{m,p}(\R^n)$ is a continuous function, and therefore can be restricted to an arbitrary subset $\Omega \subset \R^n$. We thus define the trace seminorm for functions $f: \Omega \rightarrow \R$ by
\[
||f||_{L^{m,p}(\Omega)} = \inf\{ \| F \|_{L^{m,p}(\R^n)} : F \in L^{m,p}(\R^n), \; F|_\Omega =f\},
\]
and we define the trace space $L^{m,p}(\Omega)$ to be the set of all functions $f: \Omega \rightarrow \R$ with finite trace norm. We say that an operator $T: L^{m,p}(\Omega) \rightarrow L^{m,p}(\R^n)$ is an \emph{extension operator} if $Tf|_\Omega = f$ for every $f \in L^{m,p}(\Omega)$.\extrafootertext{This work was supported by AFOSR grant FA9550-19-1-0005.}

\sloppypar In this article, we consider the \emph{Sobolev extension problem}: Given a finite subset $\Omega \subset \R^n$, does there exist a bounded linear extension operator ${T: L^{m,p}(\Omega) \rightarrow L^{m,p}(\R^n)}$ satisfying $||Tf||_{L^{m,p}(\R^n)} \le C||f||_{L^{m,p}(\Omega)}$ for some constant $C=C(m,n,p)$ (in particular, $C$ is independent of $\Omega$)?

When $p> n$ and $m$ is arbitrary, the second-named author, C. Fefferman, and G.K. Luli \cite{fefferman2014sobolev} completely resolved this problem in the affirmative.

When $n/m < p \le n$, however, little is known. In this article, we consider the first nontrivial case in this parameter range -- we study the Sobolev extension problem for the space $L^{2,p}(\R^2)$ when $1 < p < 2$. (Note that the problem is well-understood when $p=2$, because $L^{2,2}(\R^2)$ is a Hilbert space.) We refer to this as the \emph{planar Sobolev extension problem}.

For the remainder of this article, we assume that $1< p< 2$. We now survey what is known about the planar Sobolev extension problem. (Our focus here is on the case in which the set $\Omega$ is \emph{finite}; for interesting results when $\Omega$ is a bounded, simply connected domain, see \cite{shvartsman2016planar}.)

Recently, M. Drake, C. Fefferman, K. Ren, and A. Skorobogatova \cite{drake2024sobolev} showed that there is a bounded linear extension operator $T:L^{2,p}(\Omega) \rightarrow L^{2,p}(\R^2)$ when $\Omega$ is a finite subset of a line in $\R^2$. Moreover, the norm of their extension operator depends only on $p$, as desired.

In our previous paper \cite{carruth2023example}, we constructed a bounded linear extension operator $T: L^{2,p}(\Omega) \rightarrow L^{2,p}(\R^2)$ for $\Omega$ belonging to a certain family of discrete subsets of $\R^2$ with fractal geometry. We showed that the construction of such an operator could be reduced to an extension problem for a weighted Sobolev space on a tree. Thanks to a theorem of Fefferman-Klartag \cite{fefferman2023linear}, we were able to solve the extension problem on the tree, and thus construct a linear extension operator for the Sobolev space on the plane.

In this article, we continue to investigate the connection between the planar Sobolev extension problem and weighted Sobolev extension problem on trees. The main theorem of this paper establishes conditions under which these problems are equivalent.

Consider a rooted $N$-ary tree of depth $L \ge 1$ with vertices $V$. By $N$-ary, we mean that every non-leaf node has \emph{at most} $N$ children. In addition, to avoid degenerate branches, we require each non-leaf node to have at least 2 children. We'll abuse notation and refer to $V$ as the tree. We let $d(v)$ denote the depth of $v \in V$.

We write $[N] = \{0,1,\dots, N-1\}$. We fix an ordering of the tree, i.e., an isomorphism from $V$ to a subset of
\[
\bigcup_{k=0}^L [ N]^k
\]
so that any $v \in V$ is identified with a string of $d(v)$ digits from the set $[N]$. 

The root node of $V$ is the empty string $\emptyset$ of length zero. We write $V_0 = V \backslash \{\emptyset\}$.

For $v \in V_0$ and $1 \le k \le d(v)$, let $v_k$ denote the $k$-th entry of $v$ and let $\pi_k(v) \in V_0$ denote the prefix of $v$ of length $k$. We define $\pi_0(v) = \emptyset$ and write $\pi(v) = \pi_{d(v) - 1}(v)$ to denote the parent of $v \in V_0$. We denote the set of leaves of $V$ by $\partial V$.

Given vertices $v_0, v_1$ in $V$, if $\pi_{d(v_0)}(v_1) = v_0$ then we say that $v_1$ is a \emph{descendent} of $v_0$ and that $v_0$ is an \emph{ancestor} of $v_1$. In particular, each vertex of $V$ is both an ancestor and a descendent of itself. We let $\lca(x,y)$ denote the \emph{lowest common ancestor} of $x,y \in V$, namely, the ancestor of $x$ and $y$ of largest depth.

We suppose that we are given a set of \emph{weights} $\{W_v\}_{v \in V_0}$, where $W_v > 0$ for every $v \in V_0$. We define the $L^{1,p}(V)$-seminorm of $\Phi: V \rightarrow \R$ by
\[
||\Phi||_{L^{1,p}(V)} = \Big(\sum_{ v \in V_0} |\Phi(v) - \Phi(\pi(v))|^p \cdot  W_v^{2-p}\Big)^{1/p},
\]
and the $L^{1,p}(\partial V)$ trace seminorm of $\phi: \partial V \rightarrow \R$ by
\[
||\phi||_{L^{1,p}(\partial V)} = \inf \{ || \Phi||_{L^{1,p}(V)}: \Phi|_{\partial V} = \phi\}.
\]
We write $L^{1,p}(V)$, $L^{1,p}(\partial V)$ to denote the spaces of real-valued functions on (respectively) $V$, $\partial V$, equipped with the relevant seminorm. We say that an operator $H : L^{1,p}(\partial V) \rightarrow L^{1,p}(V)$ is an \emph{extension operator} if $H\phi|_{\partial V} = \phi$ for all $\phi: \partial V \rightarrow \R$.

We now state the \emph{weighted Sobolev extension problem on trees}: For any $N$-ary tree $V$, as above, does there exist a bounded linear extension operator $H:L^{1,p}(\partial V) \rightarrow L^{1,p}(V)$ satisfying 
\[
||H\phi||_{L^{1,p}( V)} \le C ||\phi||_{L^{1,p}(\partial V)}
\]
for a constant $C = C(p,N)$ (i.e., $C$ is independent of $V$ and the weights $\{W_v\}_{v\in V_0}$)?

We say that an $N$-ary tree is \emph{perfect} if each non-leaf node has exactly $N$ children and all leaf nodes are at the same depth. We say that weights $\{W_v\}_{v \in V_0}$ are \emph{radially symmetric} if $W_v = W_u$ for every $v,u\in V_0$ with $d(v) = d(u)$.

Thanks to the work of Fefferman and Klartag \cite{fefferman2023linear} mentioned above, such an operator $H$ is known to exist when $V$ is a perfect, binary tree with radially symmetric weights. Additionally, in \cite{bjorn2017geometric}, A.\ Bj\"{o}rn, J.\ Bj\"{o}rn, J.\ Gill, and N.\ Shanmugalingam  show that $H$ can be taken to be a simple averaging operator when $V$ is a perfect tree with radially symmetric weights satisfying certain additional properties. These are the only results that we are aware of on the problem of weighted Sobolev extension on trees. We emphasize that, to our knowledge, nothing is known for finite trees when either (1) the tree $V$ is not perfect or (2) the weights are not radially symmetric.

In this article, we make neither of these assumptions. Instead, we introduce a parameter $\varepsilon \in (0,1)$ and say that weights $\{W_v\}_{v \in V_0}$ are \emph{radially decaying} provided
\begin{equation}\label{eq: weight cond}
    W_v \le \varepsilon W_{\pi(v)}\; \text{for all}\; v \in V_0.
\end{equation}
Here and in the remainder of this paper, we adopt the convention that $W_{\emptyset}=1$. Clearly, for such radially decaying weights we have
\begin{equation}
    W_{v_1} \leq \varepsilon^{d(v_1) - d(v_0)} W_{v_0} \mbox{ if } v_1 \mbox{ is a descendent of } v_0 \mbox{ in } V.
    \label{eqn: rad_decay}
\end{equation}
We then have the following theorem.

\begin{thm}\label{thm: main}
There exists an absolute constant $k_0 >0$ so that the following holds. Fix $N \geq 2$. Let $V$ be an $N$-ary tree, and let $\{W_v\}_{v\in V_0}$ be radially decaying weights satisfying \eqref{eq: weight cond} for some $\varepsilon \le k_0/ N$. Then there exists a set $E \subset \R^2$ such that the following holds: 

For any $1<p< 2$, there exists a bounded linear extension operator $H: L^{1,p}(\partial V)\rightarrow L^{1,p}(V)$ if and only if there exists a bounded linear extension operator $T: L^{2,p}(E) \rightarrow L^{2,p}(\R^2)$. 
    
In addition, if such operators exist, then
    \[
    C^{-1} ||T||_{L^{2,p}(E)\rightarrow L^{2,p}(\R^2)} \le ||H||_{L^{1,p}(\partial V)\rightarrow L^{1,p}(V)} \le C ||T||_{L^{2,p}(E)\rightarrow L^{2,p}(\R^2)}
    \]
    for a constant $C= C(p,N)$.
\end{thm}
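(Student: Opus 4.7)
My plan is to construct $E \subset \R^2$ as a hierarchical point configuration encoding the tree $V$ and its weights, and then to prove equivalence of the extension operators by producing mutually inverse (up to constants) linear comparisons between $L^{1,p}(\partial V)$ and $L^{2,p}(E)$. The smallness condition $\varepsilon \le k_0/N$ is precisely what permits the children of any vertex to be placed as well-separated sub-regions in the plane, and it is this separation that decouples the local Sobolev estimates needed in both directions.

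\textbf{Construction of $E$ and the lifting.} I associate to each $v \in V$ an axis-parallel square $\cQ_v \subset \R^2$ of side length $\ell_v$, where $\ell_v$ is a specific function of $W_v$ calibrated for the matching below. Starting from a unit-sized root square $\cQ_\emptyset$, the children squares of any non-leaf $v$ are placed as disjoint sub-squares of $\cQ_v$; since $W_{v'}/W_v \le k_0/N$, the ratio $\ell_{v'}/\ell_v$ is much smaller than $1/N$, and the $N$ children fit with pairwise separation $\gtrsim \ell_v/N$. For each leaf $v$ I pick a distinguished point $x_v \in \cQ_v$ and set $E = \{x_v : v \in \partial V\}$, so that tree data and plane data are identified by $\phi(v) \leftrightarrow f(x_v)$. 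For the lifting, given $\Phi$ extending $\phi$, I take $F = \sum_v \Phi(v)\, \theta_v$ for a hierarchy-adapted smooth partition of unity $\{\theta_v\}$, with $\theta_v$ supported near $\cQ_v$ and equal to $1$ on $\cQ_v$ minus the union of its children squares. Rewriting this as a telescope $F - \Phi(\emptyset) = \sum_{v \in V_0} (\Phi(v) - \Phi(\pi(v)))\, \Theta_v$, with each $\Theta_v$ a smooth cutoff supported in $\cQ_{\pi(v)}$ and equal to $1$ on $\cQ_v$, and using that the second-derivative supports of the $\Theta_v$ are essentially disjoint across $v$ (by the separation arranged above), I obtain a bound
\[
\|F\|_{L^{2,p}(\R^2)}^p \lesssim \sum_{v \in V_0} |\Phi(v) - \Phi(\pi(v))|^p \cdot r_v,
\]
where $r_v \sim \int |\nabla^2 \Theta_v|^p$. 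The calibration of $\ell_v$ versus $W_v$ is fixed so that $r_v \sim W_v^{2-p}$, yielding $\|F\|_{L^{2,p}(\R^2)} \lesssim \|\Phi\|_{L^{1,p}(V)}$.

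\textbf{The restriction.} Given $F \in L^{2,p}(\R^2)$ extending $f$, I define $\Phi : V \to \R$ linearly by $\Phi(v) = f(x_v)$ for leaves and by a weighted average of $F$ over a distinguished sub-region of $\cQ_v \setminus \bigcup_{\pi(v')=v} \cQ_{v'}$ for non-leaves. Applying the Poincar\'e--Sobolev inequality to $F$ on a mild enlargement $\cQ_v^*$ of $\cQ_v$ (still contained in $\cQ_{\pi(v)}$), and using the same calibration as in the lifting, I obtain
\[
|\Phi(v) - \Phi(\pi(v))|^p \cdot W_v^{2-p} \lesssim \|\nabla^2 F\|_{L^p(\cQ_v^*)}^p.
\]
The bounded overlap of the family $\{\cQ_v^*\}_{v \in V_0}$, which follows from the sibling separation $\gtrsim \ell_v/N$, then lets me sum over $v$ and conclude $\|\Phi\|_{L^{1,p}(V)} \lesssim \|F\|_{L^{2,p}(\R^2)}$.

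\textbf{Equivalence of operators and main obstacle.} Letting $L\Phi = F$ denote the lifting and $RF = \Phi$ the restriction, both are bounded linear with operator norm $O_N(1)$ and respect the trace identification. If $H$ is a bounded linear extension operator on the tree, then $Tf := L(H(\phi_f))$, where $\phi_f(v) = f(x_v)$, is a bounded linear extension operator on $E$ of norm $\lesssim \|H\|$; conversely $H\phi := R(T(f_\phi))$ gives $\|H\| \lesssim \|T\|$. The main obstacle is the restriction step: one must design the averaging scheme and the Poincar\'e enlargements $\cQ_v^*$ so that the scale factors in the Poincar\'e estimate cancel \emph{exactly} with the tree weight $W_v^{2-p}$, while simultaneously keeping the $\cQ_v^*$ with uniformly bounded overlap across all $N$ children of any common parent. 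The factor $k_0/N$ in the decay hypothesis is precisely the geometric slack needed to permit both requirements at once. A secondary subtlety is that the set $E$ itself must be independent of $p$, even though the calibrated squares $\cQ_v$, cutoffs, and averaging regions used in the proof naturally vary with $p$; this is compatible because $E$ only records the leaf points $x_v$, which can be fixed once and for all.
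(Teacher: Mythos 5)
Your scheme hinges on identifying the tree data $\phi(v)$ with the function \emph{values} $f(x_v)$, and this is where the argument breaks, for a reason that no choice of calibration or cutoffs can repair. A dimensional check makes this visible. In your lifting $F = \sum_v \Phi(v)\theta_v$, a cutoff $\Theta_v$ attached to a square of side $\ell_v$ contributes $\int |\nabla^2 \Theta_v|^p \approx \ell_v^{2-2p}$; matching the tree weight $W_v^{2-p}$ would require $\ell_v \approx W_v^{(2-p)/(2-2p)}$, and since $(2-p)/(2-2p) < 0$ for $1<p<2$ this would force $\ell_v\to\infty$ as $W_v\to 0$, and in any case would make $\ell_v$ (hence the placement of the $x_v$ and the sibling separations) depend on $p$. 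Your restriction step has an even sharper difficulty: if $\Phi(v)$ is an average of $F$ over a region $R_v$ near $x_v$, then
\[
\Phi(v) - \Phi(\pi(v)) = (\nabla F)_{B}\cdot\bigl(\bar{x}_{R_v} - \bar{x}_{R_{\pi(v)}}\bigr) + O\bigl(\delta^{2-2/p}\|\nabla^2 F\|_{L^p(B)}\bigr),
\]
where $B$ is a ball containing both regions and $\bar{x}_{R}$ denotes a centroid. For $1<p<2$ the gradient of an $L^{2,p}(\R^2)$ function is unbounded, so the first-order term is not controlled by $\|\nabla^2 F\|_{L^p}$ on any fixed region; and because the children of a given vertex sit at genuinely different places, the centroid displacements cannot all be made to vanish. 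So the Poincar\'e--Sobolev estimate you invoke does not give the claimed bound.

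The paper resolves both problems by changing what the tree data represents, and this requires an extra piece of geometry that your construction lacks. The set is $E = E_1 \cup E_2$, where $E_1 \subset \R\times\{0\}$ is a dense baseline on the horizontal axis and $E_2 = \{(\Psi(v), W_v): v \in \partial V\}$ places each leaf at \emph{height} $W_v$ above the axis, with the horizontal map $\Psi$ arranged so that $|\Psi(v)-\Psi(w)| \approx W_{\lca(v,w)}$ (Lemma~\ref{lem: psi}). The tree datum is then a normal \emph{slope}: $\phi(\{x\}) = \partial_2 P_x$, where $P_x$ is the affine interpolant of $f$ at $x$ and at two nearby baseline points $z_x,w_x\in E_1$. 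A slope has dimension $[\mathrm{function}]/[\mathrm{length}]$, so the weighted tree norm $\sum |\Phi(v)-\Phi(\pi(v))|^p W_v^{2-p}$ now has exactly the scaling $[\mathrm{func}]^p[\mathrm{len}]^{2-2p}$ of $\int|\nabla^2 F|^p$, and the uncontrolled gradient term disappears from the restriction: one takes $\Phi(C) = (\partial_2 F)_{B_C}$ over a nested family of balls, and differences of these averages across scales are bounded by $\|\nabla^2 F\|_{L^p}$ on disjoint annuli (Lemma~\ref{lem: ball est}). In the lifting direction, the local polynomial $P_Q(x) = L_Q(x) + x^{(2)}\Phi(C_Q)$, with $L_Q$ determined by the baseline data on $E_1$, both interpolates $f$ and converts differences of $\Phi$ into the weighted tree norm. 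Without a baseline like $E_1$ there is no trace data from which to form a slope, so this is a missing structural ingredient in your proposal rather than a technical detail to be fixed later.
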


Thanks to Theorem \ref{thm: main}, a negative answer to the problem of Sobolev extension on trees with radially decaying weights would resolve the planar Sobolev extension problem in the negative. This would be the first known example of a negative answer to the general Sobolev extension problem.

Alternatively, a positive answer to the problem of Sobolev extension on trees with radially decaying weights would produce the first known example of a bounded linear extension operator $T: L^{2,p}(E)\rightarrow L^{2,p}(\R^2)$ for certain sets $E \subset \R^2$.

We remark that in our previous paper \cite{carruth2023example}, we showed that for a certain set $E \subset \R^2$ there exists a bounded linear extension operator $L^{2,p}(E)\rightarrow L^{2,p}(\R^2)$ if there exists a bounded linear extension operator $L^{1,p}(\partial V) \rightarrow L^{1,p}(V)$ for a certain full, binary, weighted tree. (Note that we did \emph{not} show that the extension problems are equivalent.) Theorem \ref{thm: main} improves this result by (1) allowing for much more general trees and (2) establishing the equivalence of the extension problems.

For the remainder of this article we place ourselves in the setting of Theorem \ref{thm: main}: We let $k_0 > 0$ be a small enough absolute constant, to be picked later, and we fix an integer $N \ge 2$, a rooted $N$-ary tree $V$ (of which we fix some ordering), and radially decaying weights $\{W_v\}_{v \in V_0}$ satisfying \eqref{eq: weight cond} for some $0 < \varepsilon \le k_0 / N$.

We now construct the set $E \subset \R^2$ whose existence is asserted by Theorem \ref{thm: main}. Define
\begin{equation}\label{eq: delta def}
 \Delta = \min_{v \in \partial V} W_v
\end{equation}
and recursively define a map $\Psi: V \rightarrow \R$ via
\begin{equation}\label{eq: psi def}
\Psi(v) = \begin{cases}
    0 &\text{if } v = \emptyset,\\
    \Psi(\pi(v)) + W_{\pi(v)}\cdot \frac{v_{d(v)}}{N-1} &\text{else}.
\end{cases}
\end{equation}
Observe that
\begin{equation}\label{eq: psi 2}
    \Psi(v) = \sum_{i = 1}^{d(v)} W_{\pi_{i-1}(v)}\cdot\frac{v_i}{N-1}\;\text{for any}\; v \in V_0.
\end{equation}
The set $E$ is then of the form
\[
E = E_1 \cup E_2,
\]
where
\begin{align}
E_1 &= ([0,2) \cap (\Delta \Z)) \times \{0\},\label{defn: E1}\\
E_2 &= \{ (\Psi(v), W_v) : v \in \partial V\}.\label{defn: E2}
\end{align}
See Figure \ref{fig1} for an illustration of $E$ corresponding to a specific weighted tree of depth 2.

This concludes the introduction; the remainder of this article is devoted to the proof of Theorem \ref{thm: main}.

We thank Marjorie Drake, Charles Fefferman, Bo'az Klartag, Kevin Ren, Pavel Shvartsman, Anna Skorobogatova, and Ignacio Uriarte-Tuero for helpful conversations.

\begin{figure}[h]\label{fig1}
\includegraphics[width=5cm]{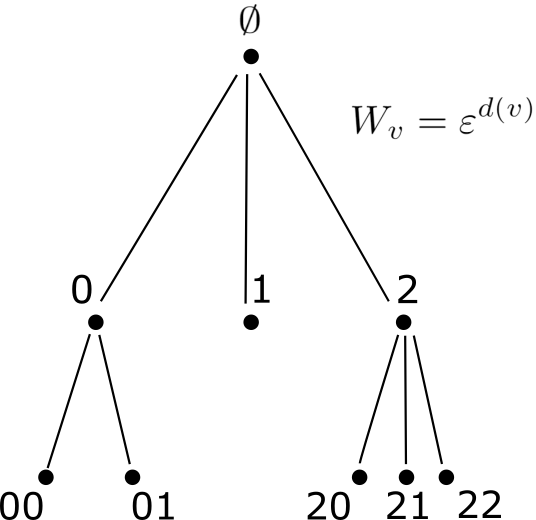}
\includegraphics[width=7cm]{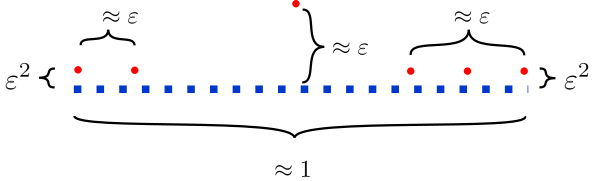}
\caption{A weighted tree $V$ of depth $2$ and the accompanying set $E = E_1 \cup E_2$. Points of $E_1$ are depicted by a sequence of blue squares of spacing $\approx \epsilon^2$, while points of $E_2$ are marked by $6$ red dots.}
\end{figure}

\section{Preliminaries}\label{sec: prelim}

Throughout this article, we will write $K, K', k, k', \dots$ to denote positive absolute constants (independent of $p$ and all other parameters), and $K_X, K'_X, \dots$ to denote positive constants depending on a parameter $X$. The value of these constants may change from line to line. For  $A, B > 0$ we write $A \lesssim B$ (resp. $A \lesssim_X B$) if there exists a constant $K$ (resp.\ $K_X$) such that $A \le K B$ (resp.\ $A \le K_X B$). We write $A \approx B$ (resp.\ $A \approx_X$ B) if $A \lesssim B$ and $A\lesssim B$ (resp.\ $A\lesssim_X B$ and $B \lesssim_X A$).

Given $\delta > 0$, we say a set $S \subset \R^2$ is \emph{$\delta$-separated} provided $|x-y| \geq \delta$ for all distinct $x,y \in S$.

For a (Lebesgue) measurable function $F$ defined on a measurable set $S \subset \R^2$ with $|S| > 0$, we write $(F)_S := |S|^{-1} \int_S F \ dx$.

Given an annulus $A = \{ x \in \R^2 : r \leq |x-x_0| \leq R \}$ with inner radius $r$ and outer radius $R$,  the \emph{thickness ratio} of $A$ is defined to be the quantity $R/r$.

The following version of the Sobolev inequality is proved in \cite{carruth2023example}.
\begin{lem}\label{lem: sobolev}
    Let $\Omega\subset \R^2$ be a square, a ball, or an annulus with thickness ratio at most $C_0 \in [1,\infty)$ and let $1 < r< 2$. For any $F \in L^{2,r}(\Omega)$ and any $x \in \Omega$, we define an affine function $T_{x,\Omega}(F):\R^2 \rightarrow \R$ by
    \[
    T_{x,\Omega}(F)(y) = F(x) + (\nabla F)_{\Omega}\cdot (y-x).
    \]
    We then have, for any $y \in \R^2$, that
    \[
    |T_{x,\Omega}(F)(y) - T_{z, \Omega}(F)(y)|
        \lesssim_{r,C_0}  \|F\|_{L^{2,r}(\Omega)} |x-z|^{2-2/r}\;\text{for any}\; x,z \in \Omega.
    \]
    In particular, 
    \[
    \| F - T_{x,\Omega}(F)\|_{L^\infty(\Omega)} \lesssim_{r,C_0} \diam(\Omega)^{2-2/r}\|F\|_{L^{2,r}(\Omega)}.
    \]
\end{lem}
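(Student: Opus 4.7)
The plan is to reduce both estimates of the lemma to a single pointwise H\"older-type bound. A direct computation gives
$$T_{x,\Omega}(F)(y) - T_{z,\Omega}(F)(y) = F(x) - F(z) - (\nabla F)_\Omega \cdot (x-z),$$
which is independent of $y$, so the first claim of the lemma is equivalent to the pointwise inequality
$$(\ast)\qquad |F(x) - F(z) - (\nabla F)_\Omega \cdot (x-z)| \lesssim_{r,C_0} |x-z|^{2-2/r}\,\|F\|_{L^{2,r}(\Omega)}\quad \text{for all } x,z\in \Omega.$$
The ``in particular'' $L^\infty$-bound follows immediately from $(\ast)$: for any $y \in \Omega$ one has $F(y) - T_{x,\Omega}(F)(y) = F(y) - F(x) - (\nabla F)_\Omega\cdot(y-x)$, whose modulus is bounded by $|y-x|^{2-2/r}\|F\|_{L^{2,r}(\Omega)} \le \diam(\Omega)^{2-2/r}\|F\|_{L^{2,r}(\Omega)}$.

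To prove $(\ast)$, fix $x \in \Omega$ and set $G(y) := F(y) - F(x) - (\nabla F)_\Omega\cdot(y-x)$, so that $G(x) = 0$, $\nabla^2 G = \nabla^2 F$, and the vector field $\nabla G = \nabla F - (\nabla F)_\Omega$ has mean zero on $\Omega$. Since $1<r<2$, the Sobolev conjugate exponent $s := 2r/(2-r)$ satisfies $s > 2$, and a short algebraic check gives $1 - 2/s = 2 - 2/r$. The scale-invariant Sobolev--Poincar\'e inequality on $\Omega$, applied to the mean-zero field $\nabla G$, yields
$$\|\nabla G\|_{L^s(\Omega)} \lesssim_{r,C_0} \|\nabla^2 G\|_{L^r(\Omega)} = \|F\|_{L^{2,r}(\Omega)}.$$
Since $s>2$, Morrey's embedding on $\Omega$ delivers
$$|G(z)| = |G(z) - G(x)| \lesssim_{r,C_0} |z-x|^{1-2/s}\,\|\nabla G\|_{L^s(\Omega)} = |z-x|^{2-2/r}\,\|\nabla G\|_{L^s(\Omega)},$$
and combining the two displays, together with the identity $G(z) = -(F(x) - F(z) - (\nabla F)_\Omega \cdot (x-z))$, gives $(\ast)$.

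The main obstacle is establishing the Sobolev--Poincar\'e and Morrey inequalities with constants depending only on $r$ and $C_0$, uniformly across squares, balls, and annuli of thickness ratio at most $C_0$. For squares and balls this is classical. For an annulus $A$ with inner radius $\rho$ and outer radius $R \le C_0\rho$, one shows that $A$ is a John domain with John constant depending only on $C_0$ and invokes the standard Sobolev--Poincar\'e and Morrey estimates on John domains; alternatively, one can cover $A$ by an $O(C_0)$-bounded number of overlapping balls of radius comparable to $R$ contained in $A$ and chain local Poincar\'e inequalities on adjacent pairs to obtain the required global estimates with the stated uniformity.
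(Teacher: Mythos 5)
The paper does not prove this lemma in-text: it only remarks that this version of the Sobolev inequality ``is proved in \cite{carruth2023example}'' and leaves it there, so there is no argument in this paper to compare yours against. Your route is the standard one and looks correct: the reduction to the pointwise estimate $(\ast)$ is exact, and applying the Sobolev--Poincar\'e inequality to the mean-zero field $\nabla F - (\nabla F)_\Omega$ and then Morrey's embedding to $G$ is the natural way to prove $(\ast)$; the exponent arithmetic ($s = 2r/(2-r) > 2$, $1 - 2/s = 2 - 2/r$) is right. One imprecision to note is in your geometric discussion of annuli: the John (equivalently, chaining) constant of an annulus degenerates as the thickness ratio tends to $1$, not as it grows, and your chaining balls must have radius $\approx R - \rho$ rather than $\approx R$; so what is actually needed is a two-sided bound on the ratio, not merely the upper bound $C_0$. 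This mirrors an imprecision in the lemma statement itself (``thickness ratio at most $C_0$''), and is immaterial for the paper because the annuli used later have thickness ratio exactly $10$.
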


Let $B(z,r)$ denote the ball of radius $r>0$ centered at $z \in \R^2$, and let $\cM$ denote the uncentered Hardy-Littlewood maximal operator, i.e.,
\[
(\cM f)(x) = \sup_{B(z,r) \ni x} \frac{1}{|B(z,r)|} \int_{B(z,r)} f(y)\ dy\;\text{for any}\; f \in L^1_{\text{loc}}(\R^2).
\]
Recall that $\cM$ is a bounded operator from $L^q(\R^2)$ to $L^q(\R^2)$ for any $1 < q\le  \infty$ (see, e.g., \cite{stein1993harmonic}).

\section{Properties of the map $\Psi$}\label{sec: psi}

Recall from Section \ref{sec: intro} that the children of any vertex of $V$ are ordered. Precisely, for children $x, y$ of a common parent we say that $x < y$ if $x_{d(x)} < y_{d(y)}$.

This induces an ordering on the leaves $\partial V$. Consider distinct $v, w \in \partial V$ with $d(\lca(v,w)) = m$, so that $v_i = w_i$ for all $0 \le i \le m$ but $v_{m+1} \ne w_{m+1}$. Then we say that $v < w$ if and only if $v_{m+1} < w_{m+1}$.

\begin{lem}\label{lem: psi}
    Let $\Psi: V \rightarrow \R$ be the map defined in Section \ref{sec: intro}. Then for any distinct $v, w \in \partial V$, the following hold:
    \begin{itemize}
    \item If $v < w$, then $0 \leq \Psi(v) < \Psi(w) < 2$.
    \item For an absolute constant $K>1$,
        \[
K^{-1}W_{\emph{lca}(v,w)}/N \le |\Psi(w) - \Psi(v)| \le KW_{\emph{lca}(v,w)}.
        \]
    \end{itemize}
\end{lem}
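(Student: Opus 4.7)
The plan is to exploit the common prefix structure of $v$ and $w$. Let $u = \lca(v,w)$ and $m = d(u)$; since $v,w \in \partial V$ are distinct leaves, $d(v), d(w) > m$ and $v_i = w_i$ for $1 \le i \le m$. Writing $\Psi$ via \eqref{eq: psi 2} and cancelling the first $m$ terms yields
\[
\Psi(w) - \Psi(v) = \frac{W_u (w_{m+1} - v_{m+1})}{N-1} + \sum_{i = m+2}^{d(w)} W_{\pi_{i-1}(w)}\frac{w_i}{N-1} - \sum_{i = m+2}^{d(v)} W_{\pi_{i-1}(v)}\frac{v_i}{N-1}.
\]
The radial decay bound \eqref{eqn: rad_decay} applied at the descendents of $u$ gives $W_{\pi_{i-1}(v)} \le \varepsilon^{i-1-m}W_u$ for $i \ge m+1$ (and likewise for $w$), so by comparison with a geometric series each of the two tail sums is at most $\varepsilon W_u/(1-\varepsilon)$ in absolute value.

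For the upper bound in the second bullet, the triangle inequality together with $|w_{m+1} - v_{m+1}| \le N-1$ and the above tail estimate yields $|\Psi(w) - \Psi(v)| \le W_u + 2\varepsilon W_u/(1-\varepsilon) \le KW_u$ for an absolute constant $K$, using $\varepsilon \le k_0/N \le 1/2$ (for $k_0$ small). For the lower bound, the ordering $v < w$ forces $w_{m+1} \ge v_{m+1} + 1$, so the leading term is at least $W_u/(N-1) \ge W_u/N$; the two tails together contribute at most $2\varepsilon W_u/(1-\varepsilon) \le 4\varepsilon W_u \le 4k_0 W_u/N$. Choosing $k_0$ small enough that $4k_0 \le 1/2$ absorbs the error and yields $\Psi(w) - \Psi(v) \ge W_u/(2N)$, so that we may take $K = 2$.

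With the second bullet in hand, the first bullet follows quickly. Non-negativity $\Psi(v) \ge 0$ is immediate from $v_i \ge 0$ and the positivity of the weights. Strict monotonicity $\Psi(v) < \Psi(w)$ when $v < w$ is exactly the lower bound just proved. Finally, using $v_i/(N-1) \le 1$, the convention $W_\emptyset = 1$, and the radial decay bound $W_{\pi_{i-1}(v)} \le \varepsilon^{i-1}$, we estimate
\[
\Psi(v) \le \sum_{i=1}^{d(v)} \varepsilon^{i-1} < \frac{1}{1-\varepsilon} \le 2
\]
whenever $\varepsilon \le 1/2$, which is guaranteed by $\varepsilon \le k_0/N$.

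The only delicate point is the balancing in the lower bound: the leading term has size $\approx W_u/N$ while the tail error has size $\approx \varepsilon W_u$, so a bound $\varepsilon \le k_0/N$ (rather than merely $\varepsilon$ small) is genuinely needed to ensure the leading term dominates. The rest is direct algebra with geometric series.
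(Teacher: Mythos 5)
Your proof is correct and follows essentially the same route as the paper's: you expand $\Psi(w) - \Psi(v)$ via \eqref{eq: psi 2}, cancel the shared prefix, bound the leading term by $W_u/(N-1)$ from below and $W_u$ from above, and control the two tail sums geometrically using \eqref{eqn: rad_decay}. The only cosmetic difference is that the paper spells out the case $d(v)=m+1$ with an indicator function, whereas you absorb it into an empty sum; otherwise the estimates and the use of $\varepsilon \le k_0/N$ to make the $\approx W_u/N$ leading term dominate the $\approx \varepsilon W_u$ tail are identical.
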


\begin{proof}

We claim that $\Psi(w) \in [0,2)$ for any $w \in V$. Indeed, $\Psi(w) \geq 0$ is immediate from the representation \eqref{eq: psi 2}. Because the weights are radially decaying and $W_{\emptyset} = 1$, we have $W_{\tilde{v}} \leq \varepsilon^{d(\tilde{v})}$ for all $\tilde{v} \in V$. Observe that $\varepsilon < 1/2$, since we have assumed $\varepsilon < k_0/N$ for small enough $k_0$. By \eqref{eq: psi 2} and the fact that $w_i \leq N-1$ for all $i$, we deduce that
\[
\Psi(w) = \sum_{i=1}^{d(v)} W_{\pi_{i-1}(w)} \cdot \frac{w_i}{N-1} \leq \sum_{i=1}^{d(w) } W_{\pi_{i-1}(w)} \leq  \sum_{i=1}^{d(w)} \varepsilon^{i-1} < 2.
\]

We will show that the embedding $\Psi|_{\partial V} :\partial V\rightarrow \R$ is order preserving. To see this, we fix $v,w \in \partial V$ with $d(\lca(v,w)) = m$ and $v < w$. By \eqref{eq: psi 2}, we have
\begin{equation}\label{eq: prop 1}
\Psi(w) - \Psi(v) \ge W_{\pi_{m}(w)}\cdot \frac{w_{m+1} - v_{m+1}}{N-1} - \mathbbm{1}_{d(v) \ge m+2}\sum_{i = m+2}^{d(v)} W_{\pi_{i-1}(v)}\cdot \frac{v_i}{N-1}. 
\end{equation}
Note that $ w_{m+1} > v_{m+1}$, so  $w_{m+1} - v_{m+1} \geq 1$. If $d(v) = m+1$, then \eqref{eq: prop 1} implies that 
\[
\Psi(w) - \Psi(v) > \frac{W_{\pi_m(w)}}{N}.
\]
Assume instead that $d(v) \ge m+2$. From \eqref{eqn: rad_decay}, and since $v_i \le N-1$ and $\varepsilon < 1/2$, we get
\begin{equation}\label{eq: prop 3}
\sum_{i = m+2}^{d(v)} W_{\pi_{i-1}(v)}\cdot \frac{v_i}{N-1} \le W_{\pi_{m}(v)}\sum_{i = 1}^{d(v)-m-1} \varepsilon^{i} \leq 2 \varepsilon W_{\pi_{m}(v)}.
\end{equation}
Combining this with \eqref{eq: prop 1} and using that $\pi_m(w) = \pi_m(v) = \lca(w,v)$, and $\varepsilon \le k_0/N$ for sufficiently small $k_0$, gives
\begin{equation}\label{eq: prop 2}
\Psi(w) - \Psi(v) \ge W_{\lca(w,v)}\cdot \frac{1 - 2k_0}{N} > \frac{W_{\lca(w,v)}}{2N}.
\end{equation}
In particular, $\Psi(w) > \Psi(v)$ for any $v,w \in \partial V$ with $w > v$. Therefore, the embedding $\Psi|_{\partial V}$ of $\partial V$ into $\R$ is order preserving.

We now claim that
\begin{equation}\label{eq: prop 5}
 |\Psi(v) - \Psi(w)| \le KW_{\lca(v,w)}\;\text{for any distinct}\; v, w \in \partial V.
\end{equation}
Consider distinct $v,w \in \partial V$ with $d(\lca(v,w)) = m$. Combining \eqref{eq: psi 2} and the triangle inequality gives
\[
|\Psi(v) - \Psi(w)| \le \sum_{i = m+1}^{d(v)} W_{\pi_{i-1}(v)}\cdot \frac{v_i}{N-1} + \sum_{i = m+1}^{d(w)} W_{\pi_{i-1}(w)}\cdot \frac{w_i}{N-1}.
\]
Arguing as in \eqref{eq: prop 3}, we deduce \eqref{eq: prop 5}. Together with \eqref{eq: prop 2}, we have established the second bullet point of the lemma.

\end{proof}

Recall that the map $\Psi$ is used to define the set $E_2$ in \eqref{defn: E2}, and $E_1 \subset \R \times \{0\}$ is defined in \eqref{defn: E1}. We now establish some basic properties of the set $E$. 

\begin{lem}\label{lem: sep}
The set $E$ has the following properties:
    \begin{enumerate}
        \item $E \subset [0,2) \times [0,2)$\label{lem: sep1},
        \item $E$ is $\Delta$-separated,\label{lem: sep2}
        \item Let $x \in E_2$. Then
        \begin{enumerate}
            \item $\Delta \le x^{(2)} \le \dist(x, E_1) \le 2 x^{(2)}$,
            \item $\Delta \leq x^{(2)} \leq \dist(x, E_2\backslash \{x\})$.
        \end{enumerate}\label{lem: sep3}
    \end{enumerate}
\end{lem}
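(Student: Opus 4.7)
The plan is to prove the three items in the order (1), (3), (2), so that the $\Delta$-separation in (2) falls out as an easy corollary of the finer distance estimates in (3). For (1), the inclusion $E_1 \subset [0,2) \times \{0\}$ is immediate from \eqref{defn: E1}. For $E_2$, the bound $\Psi(v) \in [0,2)$ is already established in the first paragraph of the proof of Lemma \ref{lem: psi}; and because every leaf $v$ has $d(v) \ge 1$, the estimate \eqref{eqn: rad_decay} combined with $W_{\emptyset}=1$ and $\varepsilon<1$ yields $W_v \le \varepsilon^{d(v)} < 1 < 2$, so every point of $E_2$ lies in $[0,2)\times[0,2)$.

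For (3)(a), the inequality $\Delta \le W_v$ is just the definition \eqref{eq: delta def} of $\Delta$, and the bound $W_v \le \dist(x,E_1)$ is immediate since every point of $E_1$ has zero second coordinate. For the upper bound I would take $k = \lfloor \Psi(v)/\Delta \rfloor$; since $\Psi(v) < 2$, the point $y = (k\Delta, 0)$ lies in $E_1$, and $|\Psi(v) - k\Delta| < \Delta \le W_v$, so the Pythagorean inequality gives $\dist(x, E_1) \le \sqrt{2}\, W_v \le 2 W_v$.

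The only genuinely substantive step is (3)(b). Fix distinct $v, w \in \partial V$, set $a = \lca(v,w)$, and observe that since $v \ne w$ and both are leaves, $a$ is a \emph{proper} ancestor of $v$; consequently \eqref{eqn: rad_decay} gives $W_a \ge W_v/\varepsilon$. Combining the lower bound of Lemma \ref{lem: psi} with the hypothesis $\varepsilon \le k_0/N$,
\[
|\Psi(v) - \Psi(w)| \;\ge\; \frac{W_a}{KN} \;\ge\; \frac{W_v}{KN\varepsilon} \;\ge\; \frac{W_v}{Kk_0},
\]
which exceeds $W_v$ once $k_0$ is chosen so that $Kk_0 \le 1$ (an absolute constraint). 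Since $|x-y| \ge |\Psi(v) - \Psi(w)|$, this yields $|x-y| \ge W_v = x^{(2)}$, as required. This is the place where the radial-decay hypothesis genuinely enters: the bound from Lemma \ref{lem: psi} alone scales like $W_a/N$, which could be much smaller than $W_v$ without the $\varepsilon$-factor gap between generations.

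Finally, (2) follows immediately by case analysis on a pair of distinct points in $E$: two points of $E_1$ are $\Delta$-separated by construction; two points of $E_2$ are separated by at least $\min(W_v, W_w) \ge \Delta$ via (3)(b) applied to whichever endpoint has the smaller weight; and a mixed $E_1$--$E_2$ pair is separated by at least $W_v \ge \Delta$ via (3)(a). I expect (3)(b) to be the only real obstacle; everything else is essentially bookkeeping against the definitions and Lemma \ref{lem: psi}.
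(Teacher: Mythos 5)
Your proof is correct and follows essentially the same route as the paper's. The only cosmetic differences are that you obtain the upper bound in (3)(a) via a Pythagorean estimate ($|\Psi(v)-k\Delta| < \Delta \le W_v$ plus $|x-y|\le\sqrt 2\,W_v$) where the paper uses the triangle inequality ($\dist(x,E_1)\le x^{(2)}+\dist((x^{(1)},0),E_1)\le x^{(2)}+\Delta$), and that you route (3)(b) through $W_{\lca(v,w)}\ge W_v/\varepsilon$ while the paper passes through $W_{\pi(v)}$; both rely on the same consequence of radial decay and produce the same $\dist(x,E_2\setminus\{x\}) \gtrsim W_v/(N\varepsilon)$ bound, and both then shrink $k_0$. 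You correctly identified (3)(b) as the sole substantive step where the decay hypothesis enters.
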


\begin{proof}
Since the weights are radially decaying, we have $W_v < 1$ for any $v \in \partial V$. Combining this with \eqref{defn: E1} and Lemma \ref{lem: psi}, we deduce Part \ref{lem: sep1} of the lemma.

Note that Part \ref{lem: sep2} of the lemma follows from Part \ref{lem: sep3} (recall that the points of $E_1$ are $\Delta$-separated by definition. 

It remains to prove Part \ref{lem: sep3}.

Let $x = (x^{(1)}, x^{(2)}) \in E_2$. Then $x = (\Psi(v), W_v)$ for some $v \in \partial V$. Therefore,
\[
\dist(x, E_1)  \ge x^{(2)}  \ge \min_{v \in \partial V} W_v = \Delta.
\]
Since $\Psi(v) \in [0,2)$ for $v \in \partial V$, and by definition of $E_1$ in \eqref{defn: E1},
\[
\dist(x,E_1) \leq x^{(2)} + \dist((x^{(1)},0),E_1) \leq x^{(2)} + \Delta \leq 2 x^{(2)}.
\]
By Lemma \ref{lem: psi}, and since the weights are radially decaying,
\[
\dist(x, E_2\backslash \{x\}) \ge \frac{W_{\pi(v)}}{KN} \ge \frac{W_v}{KN\varepsilon}.
\]
Recall that $\varepsilon \le k_0 / N$, and thus taking $k_0$ sufficiently small gives 
\[
\dist(x, E_2\backslash \{x\}) \ge \frac{x^{(2)}}{KN\varepsilon} \ge \frac{x^{(2)}}{Kk_0}  \ge  x^{(2)} \geq \Delta.
\]
This concludes the proof of Part \ref{lem: sep3}.
\end{proof}

\section{The Whitney decomposition}\label{sec: decomp}

This section borrows heavily from Section 3 of our previous paper \cite{carruth2023example}.

We will work with squares in $\R^2$; by this we mean axis parallel squares of the form $Q = [a_1, b_1) \times [a_2, b_2)$. We let $\delta_Q$ denote the sidelength of such a square $Q$. To \emph{bisect} a square $Q$ is to partition $Q$ into squares $Q_1, Q_2, Q_3 ,Q_4$, where $\delta_{Q_i} = \delta_Q / 2$ for each $i = 1,2,3,4$. We refer to the $Q_i$ as the \emph{children} of $Q$. 

We define a square $Q^0 = [-3, 5) \times [-3,5)$; note that $E \subset Q^0$. A \emph{dyadic} square $Q$ is one that arises from repeated bisection of $Q^0$. Every dyadic square $Q \neq Q^0$ is the child of some square $Q'$; we call $Q'$ the \emph{parent} of $Q$ and denote this by $(Q)^+ = Q'$.

We say that two dyadic square $Q, Q'$ \emph{touch} if $1.1Q \cap 1.1Q' \ne \emptyset$. We write $Q \touch Q'$ to denote that $Q$ touches $Q'$.  

For any dyadic square $Q$, we define a collection $\cW(Q)$, called the \emph{Whitney decomposition of} $Q$, by setting
\[
\cW(Q) = \{Q\} \;\text{if } \#(3Q\cap E) \le 1,
\]
and
\[
\cW(Q) = \bigcup \{\cW(Q') : (Q')^+ = Q\}\;\text{if}\; \#(3Q \cap E) \ge 2.
\]
We write $\cW = \cW(Q^0)$. Evidently, $\cW$ is a partition of $Q^0$ by dyadic squares. Note that $\cW \neq \{ Q^0\}$ because $\#(3Q^0 \cap E) = \# E \geq 2$. We now collect a few useful properties of the family $\cW$.

\begin{lem}\label{lem: cz}
    The collection $\cW$ has the following properties:
    \begin{enumerate}
    \item For any $Q \in \cW$, we have $\#(1.1Q \cap E) \le 1$ and $\#(3Q^+ \cap E ) \ge 2$.
        \item For any $Q, Q' \in \cW$ with $Q \touch Q'$, we have $\frac{1}{2}\delta_Q \le \delta_{Q'}\le 2 \delta_Q$.
        \item For any $Q \in \cW$, we have
        \[
        \#\{Q' : Q\touch Q'\} \lesssim 1.
        \]
        \item For any $x \in \R^2$,
        \[
        \#\{Q \in \cW : x \in 1.1Q\} \lesssim 1.
        \]
        \item For any $Q \in \cW$ with $\#(1.1Q \cap E) = 0$, we have $\delta_Q \approx \emph{dist}(Q,E)$.
    \end{enumerate}
\end{lem}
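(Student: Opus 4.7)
The plan is to unwind the recursion in the definition of $\cW$, which yields two direct facts about every $Q \in \cW$: first, $Q$ arose via the base case of the recursion, so $\#(3Q\cap E)\leq 1$ (which gives the easy half of Part 1, since $1.1Q\subset 3Q$); second, $Q\neq Q^0$ (because $\#(3Q^0\cap E)= \#E\geq 2$ sends the recursion into the else-branch), so $Q$ has a parent $Q^+$ that was bisected, i.e., $\#(3Q^+\cap E)\geq 2$ (the hard half of Part 1). Parts 2--5 all follow from these two observations together with elementary dyadic geometry.

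For Part 2, I would argue by contradiction. Assume $Q\touch Q'$ in $\cW$ with $\delta_Q \leq \delta_{Q'}$ and $\delta_{Q'}/\delta_Q$ larger than some absolute constant. The key step is to produce an ancestor $\tilde Q \supseteq Q$ of sidelength strictly between $\delta_Q$ and $\delta_{Q'}$ for which $3\tilde Q\subset 3Q'$. Since $\tilde Q$ is a strict ancestor of the cube $Q\in\cW$, the recursion reached $\tilde Q$ and bisected it (otherwise $\tilde Q$ itself would lie in $\cW$ and $Q$ would not), giving $\#(3\tilde Q\cap E)\geq 2$; combined with $3\tilde Q\subset 3Q'$ this contradicts $\#(3Q'\cap E)\leq 1$ from Part 1 applied to $Q'$. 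The containment $3\tilde Q\subset 3Q'$ is an $\ell^\infty$ center-distance computation: $1.1Q\cap 1.1Q'\neq\emptyset$ places the centers of $Q$ and $Q'$ at $\ell^\infty$ distance at most $0.55(\delta_Q+\delta_{Q'})$, and one chooses $\tilde Q$ at the right dyadic scale so that the shift from the center of $Q$ to that of $\tilde Q$ plus the half-sidelength of $3\tilde Q$ still fits inside the half-sidelength of $3Q'$. The precise dyadic ratio one recovers depends on the interaction of the factors $1.1$ and $3$; if needed, the factor $2$ in the lemma can be replaced by a slightly larger absolute constant without affecting the subsequent arguments.

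Part 3 is then immediate: by Part 2 the touching neighbors of $Q$ lie at dyadic sidelengths in a bounded set of scales comparable to $\delta_Q$, and at each such scale only a bounded number of dyadic squares can touch $Q$. Part 4 also falls out: if $x\in 1.1Q_1\cap 1.1Q_2$ then $Q_1\touch Q_2$, so all cubes in $\cW$ whose $1.1$-dilation contains a given $x$ have mutually comparable side lengths, and at each scale at most $O(1)$ dyadic cubes have $1.1$-dilation containing $x$.

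For Part 5, the lower bound $\dist(Q,E)\geq 0.05\,\delta_Q$ comes directly from $\#(1.1Q\cap E)=0$: the buffer between $\partial Q$ and $\partial(1.1Q)$ is of width $0.05\,\delta_Q$ and meets no point of $E$. The upper bound $\dist(Q,E)\lesssim \delta_Q$ uses the second half of Part 1: $\#(3Q^+\cap E)\geq 2$ places a point of $E$ inside $3Q^+$, which has diameter $3\sqrt{2}\,\delta_{Q^+}=6\sqrt{2}\,\delta_Q$, so $\dist(Q,E)\leq 6\sqrt{2}\,\delta_Q$. The main technical obstacle in the whole argument is Part 2: the $\ell^\infty$ geometry must be tracked carefully to secure the inclusion $3\tilde Q\subset 3Q'$ at the correct dyadic scale.
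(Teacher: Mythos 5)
The paper omits the proof entirely, citing it as standard (see the remark after the lemma), so there is no paper argument to compare against; your proof is the standard Whitney-decomposition argument, and Parts~1, 3, 4, 5 are correct as written.

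The one soft spot is exactly the one you flag in Part~2. Your $\ell^\infty$ center-distance computation with an ancestor $\tilde Q$ at scale $\delta_{Q'}/2$ does \emph{not} close at ratio $4$: writing $|c_{\tilde Q}-c_{Q'}|_\infty \le (\delta_{\tilde Q}-\delta_Q)/2 + 0.55(\delta_Q+\delta_{Q'})$ and demanding $|c_{\tilde Q}-c_{Q'}|_\infty + 1.5\delta_{\tilde Q} \le 1.5\delta_{Q'}$ forces $\delta_{\tilde Q}\lesssim 0.475\,\delta_{Q'}$, so the first usable strict-ancestor scale is $\delta_{Q'}/4$, which only rules out $\delta_{Q'}\ge 8\delta_Q$ and hence yields the weaker bound $\delta_{Q'}\le 4\delta_Q$. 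You anticipate this, but note that the paper does lean on the exact factor $2$: the sentence immediately after the lemma deduces $\partial Q\cap\partial Q'\neq\emptyset$ for touching Whitney squares from Part~2 together with dyadicity, and that deduction fails at ratio $4$. To get $2$ exactly, replace the metric estimate by a dyadic one. Suppose $\delta_{Q'}\ge 4\delta_Q$. Let $\hat Q$ be the ancestor of $Q$ with $\delta_{\hat Q}=\delta_{Q'}$; then $\hat Q\neq Q'$ (else $Q\subset Q'$), so $\hat Q$ and $Q'$ are disjoint equal-size dyadic squares with $\dist(\hat Q,Q')\le \dist(Q,Q')\le 0.05(\delta_Q+\delta_{Q'})<\delta_{Q'}$, and disjoint equal-size dyadic squares are either adjacent or at distance $\ge\delta_{Q'}$; hence $\hat Q$ and $Q'$ are adjacent. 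Let $\hat Q'$ be the child of $\hat Q$ containing $Q$ (so $\delta_{\hat Q'}=\delta_{Q'}/2>\delta_Q$, a strict ancestor of $Q$). Since $\dist(Q,Q')<0.1\delta_{Q'}<\delta_{Q'}/2$, $\hat Q'$ is one of the children of $\hat Q$ touching $Q'$; a direct check in the edge-adjacent and corner-adjacent configurations then shows $3\hat Q'\subset 3Q'$, whence $\#(3Q'\cap E)\ge\#(3\hat Q'\cap E)\ge 2$, contradicting $Q'\in\cW$. This closes Part~2 with the stated constant.
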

We omit the proof of Lemma \ref{lem: cz}, as this type of decomposition is standard in the literature; see, e.g., \cite{fefferman2020fitting}.

Observe that property 2 of Lemma \ref{lem: cz}, combined with the fact that all dyadic squares arise from repeated bisection of $Q^0$, implies that for any $Q, Q' \in \cW$ with $Q \leftrightarrow Q'$, we in fact have $\partial Q \cap \partial Q' \ne \emptyset$.

We now remark that
\begin{equation}\label{eq: cz 1}
\delta_Q \ge \frac{\Delta}{20}\;\text{for any}\; Q \in \cW.
\end{equation}
To see this, observe that $3Q^+ \subset 9Q$. Thus, Property 1 of Lemma \ref{lem: cz} implies that $\# (9Q \cap E) \ge 2$. Since the distance between distinct points of $E$ is at least $\Delta$, it follows that $\delta_Q \ge \Delta /20$, as claimed.

Let $\partial Q$ denote the boundary of a square $Q$. We say that $Q \in \cW$ is a \emph{boundary square} if $1.1Q \cap \partial Q^0 \ne \emptyset$. Denote the set of boundary squares by $\partial \cW$. We remark that since dyadic squares arise from repeated bisection of $Q^0$, any boundary square $Q \in \partial \cW$ satisfies the stronger property $Q \cap \partial Q^0 \ne \emptyset$. Observe that
\begin{equation}\label{eq: bdry 1}
    \delta_Q \ge 1\;\text{for any}\; Q \in \partial \cW.
\end{equation}
Indeed, this follows because $E \subset [0,2) \times [0,2)$, and if $Q$ is a dyadic square intersecting the boundary of $Q^0 = [-3,5) \times [-3,5)$ with $\delta_Q \leq 1/2$, then $Q^+$ is a dyadic square intersecting the boundary of $Q^0$ with $\delta_{Q^+} \leq 1$, which implies that $3Q^+$ is disjoint from $E$, and hence $Q \notin \cW$ (see Part 1 of Lemma 4).

Note that 
\begin{equation}\label{z0w0:prop}
E \subset 50 Q \mbox{ for any } Q \in \partial \cW.
\end{equation}
This follows from \eqref{eq: bdry 1} and because $E \subset [0,2) \times [0,2)$, while $Q \subset [-3,5) \times [-3,5)$.

\begin{defn}[Type I,II,II squares]
A square $Q \in \cW$ is of \emph{Type I} if $\#(1.1Q \cap E_1) = 1$, \emph{Type II} if $\#(1.1Q \cap E_2) = 1$, and \emph{Type III} if $\#(1.1Q \cap E) = 0$. The collections of squares of Type I, II, and III are denoted by $\cW_I$, $\cW_{II}$, and $\cW_{III}$, respectively. 
\end{defn}

The collections $\cW_I$, $\cW_{II}$, and $\cW_{III}$ form a partition of $\cW$ because $\#(1.1Q \cap E) \leq 1$ for any $Q \in \cW$, while the set $E$ is partitioned as $E  = E_1 \cup E_2$. Also observe that 
\[
\partial \cW \subset \cW_{III}.
\]

\begin{lem}\label{lem:dist_bd} For any $Q \in \cW$, we have $\delta_Q \approx (\Delta + \dist(Q, E_1))$.
\end{lem}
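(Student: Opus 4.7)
The plan is to establish the two inequalities $\delta_Q \gtrsim \Delta + \dist(Q, E_1)$ and $\delta_Q \lesssim \Delta + \dist(Q, E_1)$ separately. The lower bound $\delta_Q \gtrsim \Delta$ is immediate from \eqref{eq: cz 1}, so the real task is to show $\delta_Q \approx \dist(Q, E_1)$ up to additive $\Delta$ terms. I will handle the three cases $Q \in \cW_I$, $Q \in \cW_{II}$, and $Q \in \cW_{III}$ separately, invoking only Lemmas \ref{lem: psi}, \ref{lem: sep}, and \ref{lem: cz}.

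For $Q \in \cW_I$, there is a unique point $x \in 1.1Q \cap E_1$, which gives $\dist(Q, E_1) \lesssim \delta_Q$ for free, so it suffices to prove $\delta_Q \lesssim \Delta$. Since $\Delta < 1$, the set $E_1$ contains at least two points, and therefore $x$ has a neighbor $x' \in E_1$ with $|x - x'| = \Delta$. If $\delta_Q$ were significantly larger than $\Delta$, both $x$ and $x'$ would lie in $3Q$, contradicting Property~1 of Lemma \ref{lem: cz}.

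For $Q \in \cW_{II}$, let $y = (\Psi(v), W_v)$ be the unique point of $1.1Q \cap E_2$. Because $\Psi(v) \in [0, 2)$ by Lemma \ref{lem: psi} and the grid spacing $\Delta$ of $E_1$ is at most $W_v$, there exists a point $x \in E_1$ within distance $\Delta + W_v \le 2 W_v$ of $y$; if $\delta_Q$ were significantly larger than $W_v$, then $x \in 3Q$, again contradicting $\#(3Q \cap E) \le 1$. This yields $\delta_Q \lesssim W_v$, which combined with part~3(a) of Lemma \ref{lem: sep} gives $\dist(Q, E_1) \approx W_v \gtrsim \delta_Q$. For the reverse inequality $\dist(Q, E_1) \lesssim \delta_Q$, the two points of $3Q^+ \cap E$ provided by Property~1 have mutual distance $\lesssim \delta_Q$, and part~3(b) of Lemma \ref{lem: sep} then forces $W_v \lesssim \delta_Q$; the triangle inequality closes the argument.

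For $Q \in \cW_{III}$, Property~5 of Lemma \ref{lem: cz} yields $\delta_Q \approx \dist(Q, E) \le \dist(Q, E_1)$, which is the desired upper bound. For $\dist(Q, E_1) \lesssim \delta_Q$, I fix any $z \in 3Q^+ \cap E$: if $z \in E_1$ the bound is immediate, while if $z \in E_2$, a second point of $3Q^+ \cap E$ combined with part~3(b) of Lemma \ref{lem: sep} forces $z^{(2)} \lesssim \delta_Q$, after which part~3(a) of Lemma \ref{lem: sep} and the triangle inequality close the argument. The main obstacle is the Type~II case, which requires simultaneously extracting matching upper and lower bounds $\delta_Q \approx W_v$ from parts 3(a) and 3(b) of Lemma \ref{lem: sep}, leveraging the key geometric fact that every $E_2$ point lies within $2W_v$ of some $E_1$ point because $\Psi(v) \in [0, 2)$.
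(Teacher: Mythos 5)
Your overall structure (three-way case split by type, combined with $\delta_Q \gtrsim \Delta$ from \eqref{eq: cz 1}, and using Lemma~\ref{lem: sep} part~3 plus $\#(3Q^+\cap E)\ge 2$ for the bound $\dist(Q,E_1)\lesssim \delta_Q$) matches the paper's, and the Type~I and Type~III cases are handled correctly. There is, however, a genuine gap in the Type~II case. You prove $\delta_Q \lesssim W_v$ and then assert that part~3(a) of Lemma~\ref{lem: sep} ``gives $\dist(Q,E_1)\approx W_v\gtrsim\delta_Q$.'' Part~3(a) controls $\dist(x_Q,E_1)\approx W_v$, not $\dist(Q,E_1)$; passing from the point $x_Q$ to the square $Q$ loses up to $O(\delta_Q)$, and since your bound is only $\delta_Q \lesssim W_v$ (with an unspecified and not-small constant), the estimate $\dist(Q,E_1)\ge \dist(x_Q,E_1)-O(\delta_Q)\ge W_v - O(W_v)$ is vacuous. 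In other words, you have not actually established $\dist(Q,E_1)\gtrsim\delta_Q$, which is precisely the direction of the bound needed to conclude $\delta_Q \lesssim \Delta + \dist(Q,E_1)$ for Type~II squares.

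The missing ingredient is cheap and is exactly what the paper uses: for $Q\in\cW_{II}$ (and $\cW_{III}$) one has $\#(1.1Q\cap E_1)=0$, so every point of $E_1$ lies outside $1.1Q$ and hence $\dist(Q,E_1)\ge c\,\delta_Q$ for an absolute constant $c>0$. The paper states this as a one-line observation applying uniformly to $\cW_{II}\cup\cW_{III}$, which sidesteps the whole detour through $W_v$. If you insist on routing through $W_v$, you would need to split into the regime $\delta_Q \le c' W_v$ (where the subtraction argument works) and the complementary regime $\delta_Q\gtrsim W_v$ (where you must anyway invoke $1.1Q\cap E_1=\emptyset$); better simply to invoke the latter observation outright.
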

\begin{proof}
For $Q \in \cW_I$, we have $\#(1.1Q \cap E_1) = \#(3Q\cap E_1) = 1$. Since for each $x \in E_1$ there exists $y \in E_1$ such that $|x-y| = \Delta$, we deduce that $\delta_Q \lesssim \Delta$. Combining this with \eqref{eq: cz 1} gives
\begin{equation}\label{eq: distw1}
\delta_Q \approx \Delta \;\text{for any}\; Q \in\cW_I.
\end{equation}
Since any $Q \in \cW_I$ satisfies $1.1Q \cap E_1\ne \emptyset$, we also have $\dist(Q, E_1) \le \delta_Q$. This proves the lemma for $Q \in \cW_I$.

Note that for $Q \in \cW_{II}\cup\cW_{III}$ we have $\#(1.1Q \cap E_1) = 0$, and therefore 
\begin{equation}\label{eq: cz mid1}
\delta_Q \lesssim \dist(Q, E_1)\;\text{for any}\; Q \in \cW_{II}\cup\cW_{III}.
\end{equation}

For any $Q \in \cW$ we have $\#(3Q^+\cap E)\ge 2$ and thus $\#(9Q \cap E)\ge 2 $. If $9Q \cap E_1 \ne \emptyset$, then $\dist(Q, E_1) \lesssim \delta_Q$. Assume that $9Q\cap E_1 = \emptyset$. Then there are at least two distinct points in $9Q\cap E_2$; call them $v_Q, y_Q$. Since $v_Q, y_Q \in 9Q$, we have $\dist(v_Q, y_Q) \lesssim \delta_Q$. Using Part \ref{lem: sep3} of Lemma \ref{lem: sep}, we have
\[
\dist(v_Q, E_1) \approx v_Q^{(2)} \lesssim \dist(v_Q, y_Q) \lesssim \delta_Q,
\]
and therefore
\begin{equation*}
\dist(Q, E_1)\le \dist(Q, v_Q) + \dist(v_Q, E_1) \lesssim \delta_Q.
\end{equation*}
Combining this with \eqref{eq: cz mid1} proves that
\begin{equation}\label{eq: cz mid2}
\delta_Q \approx \dist(Q, E_1) \;\text{for any}\; Q \in \cW_{II}\cup\cW_{III};
\end{equation}
combining \eqref{eq: cz mid2} with \eqref{eq: cz 1} proves the lemma for $Q \in \cW_{II}\cup \cW_{III}$. This completes the proof of the lemma.
\end{proof}

\subsection{Basepoints}\label{sec: basepoint}

To each $x \in E_2$ we associate points $z_x, w_x \in E_1$ such that 
\begin{equation}\label{eq: cz 4.6}
\dist(x, E_1) = |x - z_x| \approx |x - w_x| \approx |z_x- w_x| \approx x^{(2)};
\end{equation}
this is possible thanks to Part \ref{lem: sep3}(a) of Lemma \ref{lem: sep} and the fact that the points of $E_1$ are equispaced in $[0,2) \times \{0\}$ with separation $\Delta$ (see \eqref{defn: E1}).

For each $Q \in \cW_{II}$ we let $x_Q$ be the unique point in $1.1Q \cap E = 1.1Q \cap E_2$. Note that $x_Q$ is undefined for $Q \in \cW \setminus \cW_{II}$.

We let $z_0 := (0,0)$ and $w_0 := (w_0^{(1)},0)$ be the points of maximal separation in $E_1$. Observe that $|z_0-w_0| \approx 1$. (See \eqref{defn: E1}.)

To each $Q \in \cW$ we associate a pair of points $z_Q,w_Q \in E_1$. We list the key properties of these points in the next lemma.

\begin{lem} \label{lem: bpoint}
There exists an absolute constant $K_0>1$ so that the following holds. For each $Q \in \cW$ there exist points $z_Q, w_Q \in K_0Q \cap E_1$, satisfying the conditions below.
\begin{enumerate}
    \item $|z_Q - w_Q| \approx \delta_Q$.
    \item If $Q \in \cW_I$, then $z_Q \in (1.1Q) \cap E_1$.
    \item If $Q \in \cW_{II}$, then $z_Q = z_{x_Q}$ and $w_Q = w_{x_Q}$.
    \item If $Q \in \partial \cW$, then $z_Q = z_0 $ and $w_Q = w_0$.
\end{enumerate}

\end{lem}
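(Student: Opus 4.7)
My plan is to construct $z_Q, w_Q$ case-by-case according to the type of $Q$, and then verify property 1 and the containment $z_Q, w_Q \in K_0 Q$; properties 2, 3, 4 will hold by construction in each case.

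For $Q \in \partial \cW$, I set $z_Q := z_0$ and $w_Q := w_0$. Containment in $K_0 Q$ is immediate from $E \subset 50 Q$ (see \eqref{z0w0:prop}), and $|z_0 - w_0| \approx \delta_Q$ follows from $|z_0 - w_0| \approx 1$ together with $1 \leq \delta_Q \leq 4$ (the lower bound is \eqref{eq: bdry 1}; the upper bound holds because $Q$ is a proper descendant of $Q^0$). For $Q \in \cW_{II}$, I set $z_Q := z_{x_Q}$ and $w_Q := w_{x_Q}$. By \eqref{eq: cz 4.6}, $|z_Q - w_Q| \approx x_Q^{(2)}$, so property 1 reduces to $x_Q^{(2)} \approx \delta_Q$, which I establish by combining Part 3(a) of Lemma \ref{lem: sep} (giving $x_Q^{(2)} \approx \dist(x_Q, E_1)$) with Lemma \ref{lem:dist_bd} (which, using $x_Q \in 1.1 Q$, gives $\dist(x_Q, E_1) \approx \delta_Q$). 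Containment in $K_0 Q$ follows because $z_Q, w_Q$ lie within distance $\lesssim \delta_Q$ of $x_Q \in 1.1Q$.

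For $Q \in \cW_I$, property 2 forces $z_Q$ to be the unique element of $1.1 Q \cap E_1$. I let $w_Q$ be an $E_1$-neighbor of $z_Q$ along the line $\R \times \{0\}$, at distance exactly $\Delta$; such a neighbor exists because $E_1$ is evenly spaced in $[0, 2) \times \{0\}$ with $|E_1| \geq 2$. Then $|z_Q - w_Q| = \Delta \approx \delta_Q$ by \eqref{eq: distw1}, and the containment $w_Q \in K_0 Q$ is routine.

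The main case is $Q \in \cW_{III} \setminus \partial \cW$. I take $z_Q \in E_1$ to realize $\dist(Q, E_1)$; by Lemma \ref{lem:dist_bd}, $z_Q \in K' Q$ for an absolute constant $K'$. I then set $w_Q := z_Q \pm (s\Delta, 0)$ for $s := \lceil \delta_Q / \Delta \rceil$, choosing the sign so that $w_Q \in E_1 \subset [0, 2) \times \{0\}$. This choice of $s$ yields $s\Delta \approx \delta_Q$ (using \eqref{eq: cz 1} to handle the subcase $\delta_Q < \Delta$). The main obstacle is showing that at least one sign gives a valid $w_Q$ --- i.e., that $z_Q^{(1)}$ has room $\geq s\Delta$ within $[0, 2)$ on one side. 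I handle this by proving that any interior $Q \in \cW$ satisfies $\delta_Q \leq 1$: any $Q \in \cW$ is a proper descendant of $Q^0$, so $\delta_Q \leq 4$; but for each of the four children of $Q^0$ (sidelength $4$), the dilate $3Q$ contains $[0, 2)^2 \supset E$, forcing $\#(3Q \cap E) = \#E \geq 2$ and hence the child is not in $\cW$ (so $\delta_Q \leq 2$); and applying the same argument to the four interior grandchildren of $Q^0$ (sidelength $2$), which each have $3Q \supset [0, 2)^2$ by direct inspection, excludes those as well, yielding $\delta_Q \leq 1$. Since $E_1$ spans a length $\approx 2$ and $s\Delta \approx \delta_Q \leq 1$, at least one side of $z_Q^{(1)}$ in $[0, 2)$ has sufficient room. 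Finally, $w_Q \in K_0 Q$ follows from $z_Q \in K'Q$ and $|z_Q - w_Q| \approx \delta_Q$.
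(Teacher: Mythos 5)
Your proof is correct and follows essentially the same case-by-case approach as the paper's, with identical choices for Types I, II, and boundary squares; the only real difference is that the paper disposes of the generic (Type III, interior) case in one terse sentence, whereas you work it out explicitly, and your observation that interior Whitney squares satisfy $\delta_Q\le 1$ is a correct and useful unpacking of that step. One small flag on the "sufficient room" argument: with $s=\lceil\delta_Q/\Delta\rceil$ one only gets $s\Delta\le\delta_Q+\Delta$, which can slightly exceed $1$ when $\delta_Q=1$, so if $z_Q^{(1)}$ is near the midpoint of $[0,2)$ neither sign is guaranteed to land in $[0,2)$; taking $s=\max\bigl(1,\lfloor\delta_Q/\Delta\rfloor\bigr)$ instead gives $s\Delta\le\max(\Delta,\delta_Q)\le 1$ (since $\Delta\le\varepsilon<1/2$), while $s\Delta\gtrsim\delta_Q$ still follows from \eqref{eq: cz 1}, and then whichever of $z_Q^{(1)}<1$ or $z_Q^{(1)}\ge 1$ holds determines a valid sign.
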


\begin{proof}
For each $Q \in \cW$ there exist points $z_Q,w_Q \in K_0Q \cap E_1$ satisfying $|z_Q - w_Q| \approx \delta_Q$ provided $K_0$ is sufficiently large; this is a consequence of Lemma \ref{lem:dist_bd} and the fact that the points of $E_1$ are equispaced in $[0,2) \times \{0\}$ with separation $\Delta$.

We make small modifications to this construction to establish conditions 2 -- 4 of the lemma.

If $Q \in \cW_I$, then instead select $z_Q \in 1.1Q \cap E_1$ and let $w_Q \in E_1$ be adjacent to $z_Q$ so that $|z_Q - w_Q| = \Delta \approx \delta_Q$ (see \eqref{eq: distw1}). Then $w_Q \in K_0 Q$ for $K_0$ sufficiently large. Consequently, $z_Q, w_Q \in K_0 Q \cap E_1$.

If $Q \in \cW_{II}$, then instead take $z_Q = z_{x_Q}$ and $w_Q = w_{x_Q}$, with $x_Q$ defined as above. By \eqref{eq: cz 4.6},
\[
|z_Q - w_Q| \approx |x_Q - z_Q| = \dist(x_Q, E_1).
\]
Because $x_Q \in 1.1Q$ and by \eqref{eq: cz mid2}, we have
\[
 \dist(x_Q, E_1) \lesssim \delta_Q + \dist(Q, E_1) \approx \delta_Q.
\]
Therefore, $|z_Q - w_Q| \approx |x_Q - z_Q | \lesssim \delta_Q$. Since $x_Q \in 1.1Q$, we deduce that $z_Q, w_Q \in K_0 Q$ for large enough $K_0$. Therefore, $z_Q, w_Q \in K_0 Q \cap E_1$, as claimed.

If $Q \in \partial \cW$ then we define $z_Q = z_0$ and $w_Q = w_0$, where $z_0 = (0,0)$ and $w_0 = (w_0^{(1)},0)$ are the leftmost and rightmost points of $E_1$. Note that $|z_Q - w_Q| \approx 1 \approx \delta_Q$. It follows from \eqref{z0w0:prop} that $z_0, w_0 \in 50 Q$, so, in particular (taking $K_0 \ge 50)$, $z_Q,w_Q \in K_0 Q \cap E_1$, as desired.
\end{proof}   

\subsection{Whitney partition of unity}

Let $\{\theta_Q\}_{Q\in \cW}$ be a partition of unity subordinate to $\cW$ constructed so that the following properties hold. For any $Q \in \cW$, 
\begin{enumerate}[label={\textbf{(POU\arabic*)}}, left=0pt]
    \item $\supp(\theta_Q) \subset 1.1Q$.
    \item For any $|\alpha| \le 2$, $\|\partial^\alpha \theta_Q\|_{L^\infty}\lesssim \delta_Q^{-|\alpha|}.$
    \item $ 0 \le \theta_Q \le 1$.
\end{enumerate}
For any $x \in Q^0$, 
\begin{enumerate}[label={\textbf{(POU4)}}, left=0pt]
\item $\sum_{Q \in \cW} \theta_Q(x) =1.$
\end{enumerate}

The construction of such a partition of unity is a standard exercise and may be found in the literature; e.g., see \cite{fefferman2020fitting}.

\begin{lem}[Patching Lemma]\label{lem: patching}
    \sloppypar Given affine polynomials $\{P_Q\}_{Q \in \cW}$, define ${F:Q^0 \rightarrow \R}$ by
\[
    F(x) = \sum_{ Q\in \cW} \theta_Q(x) P_Q(x).
    \]
    Then
    \[
    ||F||_{L^{2,p}(Q^0)}^p \lesssim_p \sum_{\substack{Q, Q' \in \cW:\\ Q\touch Q' }} ||P_Q - P_{Q'}||_{L^\infty(Q)}^p \delta_Q^{2-2p}.
    \]
\end{lem}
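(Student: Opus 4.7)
The plan is to exploit the partition of unity locally on each Whitney square $Q_0 \in \cW$, rewriting $F$ as the ``reference'' affine polynomial $P_{Q_0}$ plus corrections, so that two derivatives only see the differences $P_Q - P_{Q_0}$. Concretely, by property \textbf{(POU4)} applied at every $x \in Q_0$,
\[
F(x) = P_{Q_0}(x) + \sum_{Q \in \cW} \theta_Q(x)\bigl(P_Q(x) - P_{Q_0}(x)\bigr),
\]
and by \textbf{(POU1)} together with the definition of the touch relation, only indices $Q$ with $Q \touch Q_0$ contribute nontrivially on $Q_0$.

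Next I would differentiate twice and apply the product rule. Since $P_Q - P_{Q_0}$ is affine, every term in $\nabla^2\bigl[\theta_Q(P_Q - P_{Q_0})\bigr]$ contains at least one derivative of $\theta_Q$. Using \textbf{(POU2)} to bound $|\nabla^j \theta_Q| \lesssim \delta_Q^{-j}$, the elementary estimate $\|\nabla L\|_{L^\infty(Q)} \lesssim \delta_Q^{-1}\|L\|_{L^\infty(Q)}$ for affine $L$, and the comparability $\delta_Q \approx \delta_{Q_0}$ from Part 2 of Lemma \ref{lem: cz}, I obtain the pointwise estimate
\[
|\nabla^2 F(x)| \lesssim \sum_{Q : Q \touch Q_0} \delta_{Q_0}^{-2}\,\|P_Q - P_{Q_0}\|_{L^\infty(Q_0)}, \qquad x \in Q_0.
\]

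Then I would raise to the $p$-th power and integrate over $Q_0$. Part 3 of Lemma \ref{lem: cz} bounds the number of touching neighbors of $Q_0$ by an absolute constant, so the elementary inequality $(\sum_i a_i)^p \lesssim_p \sum_i a_i^p$ for sums of uniformly bounded length together with $|Q_0| = \delta_{Q_0}^2$ gives
\[
\int_{Q_0} |\nabla^2 F|^p\,dx \;\lesssim_p\; \sum_{Q : Q \touch Q_0} \delta_{Q_0}^{2-2p}\,\|P_Q - P_{Q_0}\|_{L^\infty(Q_0)}^p.
\]
Summing over $Q_0 \in \cW$, and noting that each touching pair $(Q, Q_0)$ is counted at most twice, yields the stated bound once we verify the interchange $\|P_Q - P_{Q_0}\|_{L^\infty(Q_0)} \approx \|P_Q - P_{Q_0}\|_{L^\infty(Q)}$.

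The only non-mechanical point, and the one I would be most careful about, is this last interchange: for an affine function $L$ on $\R^2$ with $Q \touch Q_0$ (so $\delta_Q \approx \delta_{Q_0}$ and $\dist(Q,Q_0) \lesssim \delta_Q$), one estimates $L$ at any $x_0 \in Q_0$ by a value $L(y)$ at a nearby $y \in Q$ plus $|\nabla L|\cdot|x_0 - y|$, and then controls $|\nabla L|$ by $\delta_Q^{-1}\|L\|_{L^\infty(Q)}$. Applied to $L = P_Q - P_{Q_0}$, this gives comparability in both directions. Everything else is routine bookkeeping once the product-rule structure above is set up.
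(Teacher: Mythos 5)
Your proposal is correct and follows the same route as the paper's own proof: rewrite $F$ locally around a reference affine $P_{Q_0}$ via \textbf{(POU4)}, exploit locality from \textbf{(POU1)}, apply the product rule (the affine difference has vanishing Hessian), invoke \textbf{(POU2)} together with the inverse estimate $|\nabla L| \lesssim \delta_Q^{-1}\|L\|_{L^\infty(Q)}$ for affine $L$, and control the overlap via Parts 2--4 of Lemma~\ref{lem: cz} before summing over $Q_0$. The one place you add something is in explicitly justifying the interchange $\|P_Q - P_{Q_0}\|_{L^\infty(Q_0)} \approx \|P_Q - P_{Q_0}\|_{L^\infty(Q)}$, which the paper uses implicitly; your argument for it (comparable side lengths, bounded distance, affine gradient control) is exactly right.
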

\begin{proof}
    Fix a square $Q' \in \cW$. Observe that 
    \[
    F(x) = \sum_{Q \in \cW} \theta_Q(x) [P_Q(x) - P_{Q'}(x)] + P_{Q'}(x) \qquad (x \in Q^0).
    \]
    By Property 4 of Lemma \ref{lem: cz}, there are a bounded number of squares $Q \in \cW$ for which $x \in (1.1Q) \cap Q'$. Therefore, by \textbf{(POU1)}, there are a bounded number of $Q \in \cW$ with $\supp(\theta_Q) \cap Q' \ne \emptyset$. Taking $2^{\text{nd}}$ derivatives, using \textbf{(POU2)}, and integrating $p^{\text{th}}$ powers then gives
    \[
    \|F\|_{L^{2,p}(Q')}^p \lesssim_p \sum_{\substack{Q\in \cW:\\ Q \touch Q'} } \big\{\delta_Q^{2 - 2p} \|P_Q - P_{Q'}\|^p_{L^\infty(Q)} + \delta_Q^{2-p}| \nabla (P_Q - P_{Q'})|^p\big\}.
    \]
    For any affine polynomial $P$ and any square $Q$, we have $|\nabla P| \le \delta_Q^{-1} || P||_{L^\infty(Q)}$, and thus
    \[
    \|F\|_{L^{2,p}(Q')}^p \lesssim_p \sum_{\substack{Q\in \cW:\\ Q \touch Q'} } \delta_Q^{2 - 2p} \|P_Q - P_{Q'}\|^p_{L^\infty(Q)}.
    \]
    Since $\cW$ is partition of $Q^0$, summing over $Q' \in \cW$ proves the lemma.
\end{proof}

\section{Clusters of the set $E_2$}\label{sec: cluster}

For the remainder of this article we fix a sufficiently large absolute constant $K_0$ so that the conclusion of Lemma \ref{lem: bpoint} holds. All constants $K, k,$ etc.\ may depend on $K_0$.

For each $v \in V$ we define the \emph{shadow}
\begin{equation}\label{eq: clust def}
S_v = \{ u \in \partial V :  \pi_{d(v)}(u) = v\}.
\end{equation}
Each shadow is a subset of $\partial V$; we let $\cS = \{S_v\}_{v\in V}$ be the collection of shadows. Recall that we defined
\begin{equation*}
E_2 = \{ (\Psi(v), W_v) : v \in \partial V\},
\end{equation*}
and therefore the set of leaves $\partial V$ is in one-to-one correspondence with the set $E_2$. This determines an injection $\cS \rightarrow 2^{E_2}$ (where $2^{E_2}$ denotes the power set of $E_2$). We define the \emph{cluster} $C_v\subset E_2$ to be the image of $S_v$ under this injection, i.e.,
\begin{equation}\label{eq: clust def2}
C_v = \{(\Psi(u), W_u) : u \in S_v\} \qquad (v\in V). 
\end{equation}

The set of all clusters 
\[
\cC := \{C_v \}_{v \in V}
\]
forms a tree under the relation of set inclusion, i.e., $C \in \cC$ is an ancestor of $C' \in \cC$ if $C' \subset C$. Observe that for any two clusters $C, C' \in \cC$ exactly one of the following is true: (1) $C$ is an ancestor or descendant of $C'$, or (2) $C \cap C' = \emptyset$. We identify this tree with the tree $V$ via the isomorphism $v \mapsto C_v$. As with $V$, we denote the set of leaves of $\cC$ by $\partial \cC = \{C_v\}_{v \in \partial V}$ and we write $\cC_0 = \cC \backslash \{C_{\emptyset}\}$ (note that $C_{\emptyset}$ is the root node of the tree $\cC$). 

We naturally associate to the tree $\cC$ a family of weights $\{W_C\}_{C \in \cC}$ by setting $W_{C_v} = W_v$ for every $v \in V$. We can then define the weighted Sobolev space $L^{1,p}(\cC)$ and the analogous trace space $L^{1,p}(\partial \cC)$. Since the weighted trees $V$ and $\cC$ are isomorphic, a bounded linear extension operator $H: L^{1,p}(\partial V) \rightarrow L^{1,p}(V)$ induces a bounded linear extension operator $\cH: L^{1,p}(\partial \cC) \rightarrow L^{1,p}(\cC)$, and vice versa. Moreover, such operators have equal operator norms. We will make use of these facts in Sections \ref{sec: plane} and \ref{sec: tree}.

We next detail some basic geometric properties of the clusters of $E_2$.

Note that the root of the tree $\cC$ is the set $C_{\emptyset}= E_2$, while the set of leaves $\partial \cC$ is in one-to-one correspondence with the singleton sets of $E_2$. Thus each $C \in \partial \cC$ is of the form $C = \{x_C\}$ for a unique point $x_C = (x_C^{(1)}, x_C^{(2)}) \in E_2$. Observe that
\begin{equation}\label{eq: b01}
    W_C = x_C^{(2)} \;\text{for every}\; C \in \partial \cC.
\end{equation}
Using Lemma \ref{lem: psi}, the definition of clusters (see \eqref{eq: clust def},  \eqref{eq: clust def2}), and the radial decay of the weights, we have
\begin{align}
    &N^{-1} W_C \lesssim \diam(C)  \lesssim W_C \;\text{for every}\; C \in \cC\backslash \partial \cC,\label{eq: b02}\\
    &\dist(C, C') \gtrsim N^{-1}(W_{\pi(C)} + W_{\pi(C')})\;\text{for any}\; C, C' \in \cC \;\text{with}\; C \cap C' = \emptyset.\label{eq: dis}
\end{align}

For each $C \in \cC$ we fix a point $y_C \in C$. Observe that the singleton cluster $\{y_C\} \subset E_2$  is contained in $C$. Thus, by \eqref{eq: b01}, and the radial decay of the weights,
\begin{equation}\label{eq: yCprop}
y_C^{(2)} = W_{\{y_C\}} \leq W_C.
\end{equation}

We let $\kappa > 10$ be a constant to be picked in a moment. Letting $B(x,r)\subset \R^2$ denote the ball of radius $r$ centered at $x$, we define
\begin{equation}\label{eq: bcdef}
B_C = B(y_C, \kappa K_1 W_C) \;\text{for every}\; C \in \cC.
\end{equation}
Here, $K_1>1$ is a fixed absolute constant chosen so that 
\begin{align}
    & C \subset \kappa^{-1} B_C\;\text{for every}\; C \in \cC,\label{eq: preb1} \\
    & Q^0 = [-3, 5)\times [-3, 5) \subset B_{E_2}\label{eq: preb3}
\end{align}
(see \eqref{eq: b02}); note that $K_1$ does not depend on $\kappa$.

To prove \eqref{eq: preb1}, note that if $C \in \partial C$ then $C$ is a singleton set, and $y_C$ is the unique point of $C$. But $y_C$ is the center of $B_C$, so  $C \subset \kappa^{-1} B_C$. On the other hand, if $C \in \cC \setminus \partial \cC$ then $\diam(C) \lesssim W_C$ by \eqref{eq: b02}. Note that $\kappa^{-1}B_C = B(y_C, K_1 W)$. Since $y_C \in C$, we have $C \subset \kappa^{-1} B_C$ if $K_1$ is large enough. 

To prove \eqref{eq: preb3}, recall that $C_\emptyset = E_2$ is the root of $\cC$ and we have normalized the weights of the tree so that $W_{E_2} = 1$. Then \eqref{eq: preb3} is immediate provided that $K_1$ is large enough.

Recall that the constant $K_0>1$ was fixed at the beginning of this section, and recall the assumption that $\varepsilon \leq k_0/N$ for a small enough constant $k_0$. We claim that the family of balls $\{B_C\}_{C \in \cC}$ has the following properties, provided $\kappa$ is a large enough constant and $k_0$ is sufficiently small depending on $\kappa$:
\begin{enumerate}[label={\textbf{(B\arabic*)}},left=0pt]
    \item $C \subset \kappa^{-1}B_C$ for every $C \in \cC$. \label{b1}
    \item $\kappa B_C \subset B_{\pi(C)}$ for every $C \in \cC_0$.\label{b2}
    \item $\diam(B_C) = 2K_1 \kappa W_C$ for every $C \in \cC$. \label{b3} 
    \item $\dist(K_0 B_C, K_0 B_{C'}) \gtrsim N^{-1}(W_{\pi(C)} + W_{\pi(C')})$ for any $C, C' \in \cC_0$ with $C\cap C' = \emptyset$.\label{b4}
\end{enumerate}
Properties \ref{b1} and \eqref{b3} follow from \eqref{eq: preb1} and \eqref{eq: bcdef}, respectively. We prove properties \ref{b2} and \ref{b4} in a moment. First, however, observe that property \ref{b4} implies:
\begin{enumerate}[label={\textbf{(B\arabic*)}},left=0pt]
\setcounter{enumi}{4}
    \item The collection $\{K_0 B_C\}_{C \in \partial\cC}$ is pairwise disjoint.\label{b5}
    \item For any $\ell \ge 0$ the collection
\[
\{K_0 B_C : C\in \cC, d(C) = \ell\}
\]
is pairwise disjoint (recall that $d(C)$ denotes the depth of a node $C$ in the tree $\cC$).\label{b6}
\end{enumerate}

(For the deduction of \ref{b6} from \ref{b4}, note that clusters of identical depth are not ancestors or descendents of each other, and hence, must be disjoint.)

We now prove property \ref{b2}. Let $C \in \cC_0$ and $y \in \kappa B_C$. Applying the triangle inequality, we get
\[
|y_{\pi(C)}-y| \le |y_{\pi(C)} - y_C| + |y_C - y|.
\]
Since $C \subset \pi(C)$, we have $y_{\pi(C)}, y_C \in \pi(C) \subset \kappa^{-1} B_{\pi(C)}$ due to \ref{b1}. Therefore, by \ref{b3},
\[
|y_{\pi(C)} - y_C| \le \kappa^{-1} \diam(B_{\pi(C)}) = 2K_1 W_{\pi(C)}.
\]
Similarly, since $y, y_C \in \kappa B_C$ we have
\[
|y_C - y| \le 2 \kappa^2 K_1 W_{C}.
\]
Combining this with the assumption of radially decreasing weights, we have
\begin{align*}
|y_{\pi(C)} - y| \le 2 K_1( W_{\pi(C)} + \kappa^2 W_C) \le 2 K_1 W_{\pi(C)} (1+ \kappa^2 \varepsilon ).
\end{align*}
Provided $\kappa \geq 4$ and $k_0 \leq 1/\kappa^2$, using that $\varepsilon \le k_0/N$, we deduce that
\[
|y_{\pi(C)} - y| \le \kappa K_1 W_{\pi(C)}.
\]
Because $y \in \kappa B_C$ is arbitrary, we have therefore shown that
\begin{equation*}
    \kappa B_C \subset B_{\pi(C)} \;\text{for any}\; C \in \cC_0,
\end{equation*}
proving \ref{b2}.

We now prove property \ref{b4}. Let $C, C' \in \cC$ with $C \cap C' = \emptyset$. Observe that $C \subset K_0 B_C$ and $C' \subset K_0 B_{C'}$. Hence,
\[
\begin{aligned}
    \dist(K_0 B_C, K_0B_{C'}) & \ge \dist(C,C') - \diam(K_0 B_C) - \diam(K_0 B_{C'})\\
    & = \dist(C,C') - 2\kappa K_0 K_1 (W_C + W_{C'}).
\end{aligned}
\]
Combining this with \eqref{eq: dis} and the assumption of radially decreasing weights, we have
\[
\dist(K_0 B_C, K_0 B_{C'}) \ge \frac{1}{N}(k-2N\varepsilon \kappa K_0 K_1) (W_{\pi(C)} + W_{\pi(C')})
\]
for an absolute constant $k > 0$. Recall that $\varepsilon \le k_0/N$. Thus, provided $k_0$ is sufficiently small depending on $\kappa$ we have
\begin{equation}\label{eq: distbs}
\dist(K_0 B_C, K_0 B_{C'}) \gtrsim N^{-1}(W_{\pi(C)} + W_{\pi(C')}).
\end{equation}
This concludes the proof of \ref{b4}.

Thanks to property \ref{b6} and \eqref{eq: preb3}, we can define a map $\cW \ni Q \mapsto C_Q \in \cC$ as follows: For $Q \in \cW$, we define $C_Q$ to be equal to the cluster $C \in \cC$ of maximum depth for which $Q \subset B_C$. The next lemma establishes some properties of this map.

\begin{lem}\label{lem: cluster}
Provided $\kappa$ is sufficiently large and $k_0$ is sufficiently small depending on $\kappa$, the map $Q \mapsto C_Q$ has the following properties:
\begin{enumerate}[label={\emph{\textbf{(\Alph*)}}}, ref={\textbf{(\Alph*)}}]
\item If $Q \in \cW_{II}$, then $C_Q = \{x_Q\}$. (Recall from Section \ref{sec: basepoint} that $x_Q$ is the unique point of $1.1 Q \cap E_2$ for $Q \in \cW_{II}$.) \label{clust a}
\item If $Q \in \partial \cW$, then $C_Q = E_2$.
    \item If $Q, Q' \in \cW$ with $Q \leftrightarrow Q'$ and $C_Q \ne C_{Q'}$, then either $C_Q = \pi(C_{Q'})$ or $C_{Q'} = \pi(C_Q)$.
    \item  Let $C \in \cC_0$ and define 
    \[
    \cQ_C = \{(Q,Q') \in \cW \times \cW : Q \leftrightarrow Q', C_Q = C, C_{Q'} = \pi(C)\}.
    \]
    Then
    \[
    \sum_{(Q,Q') \in \cQ_C} \delta_Q^{2-p} \lesssim_{p,\kappa} W_C^{2-p}.
    \]
\end{enumerate}
\end{lem}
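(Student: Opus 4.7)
Parts (A) and (B) are short and can be dispatched first. For (A): since $x_Q \in 1.1Q$, every point of $Q$ lies within $O(\delta_Q)$ of $x_Q$, and Lemmas \ref{lem:dist_bd} and \ref{lem: sep}(3)(a) combine to give $\delta_Q \lesssim x_Q^{(2)} = W_{\{x_Q\}}$, so for $\kappa$ large, $Q \subset B(x_Q, \kappa K_1 x_Q^{(2)}) = B_{\{x_Q\}}$. Since $\{x_Q\}$ is a leaf cluster in $\cC$, property \ref{b6} (pairwise disjointness of same-depth balls) rules out containment in any other leaf ball, forcing $C_Q = \{x_Q\}$. For (B): $\delta_Q \geq 1$ for $Q \in \partial \cW$ by \eqref{eq: bdry 1}, while $\diam(B_C) \leq 2\kappa K_1 k_0/N < 1$ for every $C \in \cC_0$ once $k_0$ is small relative to $\kappa$, so no strict descendant of $E_2$ can contain $Q$; combined with $Q \subset Q^0 \subset B_{E_2}$ from \eqref{eq: preb3}, this gives $C_Q = E_2$.

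For Part (C), assume without loss of generality $d(C_Q) \geq d(C_{Q'})$, and argue in two steps. First, $C_{Q'}$ must be an ancestor of $C_Q$: otherwise the ancestor $C^*$ of $C_Q$ at depth $d(C_{Q'})$ satisfies $C^* \cap C_{Q'} = \emptyset$, so \ref{b4} yields $\dist(K_0 B_{C^*}, K_0 B_{C_{Q'}}) > 0$; but iterating \ref{b2} gives $Q \subset B_{C_Q} \subset B_{C^*}$, and $Q' \subset B_{C_{Q'}}$, while the boundary point shared by $Q$ and $Q'$ sits in both $K_0 B_{C^*}$ and $K_0 B_{C_{Q'}}$, a contradiction. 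Second, the depth difference equals $1$: otherwise $C^* := \pi(C_Q)$ is a strict descendant of $C_{Q'}$ and \ref{b2} yields $Q \subset B_{C_Q} \subset \kappa^{-1} B_{C^*}$, so $\delta_Q \lesssim W_{C^*}$; since $Q \leftrightarrow Q'$ puts all of $Q'$ within $O(\delta_Q)$ of $Q$, a short computation places $Q' \subset B_{C^*}$ for $\kappa$ large, contradicting the maximality in the definition of $C_{Q'}$.

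Part (D) is the main technical obstacle. Using Lemma \ref{lem: cz}(3) to absorb the choice of $Q'$, I reduce to bounding $\sum_Q \delta_Q^{2-p}$ over first components of $\cQ_C$. For each such $Q$, the existence of a neighbor $Q' \not\subset B_C$ within distance $O(\delta_Q)$ forces $\dist(Q, \partial B_C) \lesssim \delta_Q$, while $Q \subset B_C$ gives $\delta_Q \lesssim_\kappa W_C$. The plan is a dyadic decomposition in $s = \delta_Q$ together with the geometric claim that there are $O_\kappa(1)$ size-$s$ squares in $\cQ_C$ at each scale. Such a $Q$ lies in the intersection of the $s$-neighborhood of $\partial B_C$ with the Whitney stratum $\{x : \dist(x, E) \approx s\}$. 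In the annular region near $\partial B_C$, the $E_2$-points of $C$ lie inside $\kappa^{-1} B_C$ (by \ref{b1}) and therefore far from $\partial B_C$, while $E_2$-points in other clusters are excluded by \ref{b4}, so the only relevant $E$-points are from $E_1$ on the $x$-axis; hence the stratum is essentially a horizontal strip at height $\approx s$. The circle $\partial B_C$ has radius $\kappa K_1 W_C$ and center of height $\leq W_C$, so it crosses any such horizontal strip (at relevant heights $s \lesssim_\kappa W_C$) at an angle bounded uniformly away from $0$; thus each crossing contributes area $\lesssim_\kappa s^2$ and at most $O_\kappa(1)$ size-$s$ Whitney squares. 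Summing gives
\[
\sum_{(Q,Q') \in \cQ_C} \delta_Q^{2-p} \;\lesssim_\kappa \sum_{\substack{s \text{ dyadic} \\ s \lesssim_\kappa W_C}} s^{2-p} \;\lesssim_{p,\kappa}\; W_C^{2-p},
\]
the last step being a geometric series dominated by its largest term, valid since $2-p > 0$.

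The hard part is justifying the $O_\kappa(1)$ count at each scale. The naive perimeter-times-annulus-width bound yields $W_C/s$ squares of size $s$ and gives the wrong final estimate $W_C \Delta^{1-p}$; the improvement comes from tying the Whitney scale to the distance-to-$E$ stratum (which is thin because $E$ concentrates near the $x$-axis) and from the near-perpendicular crossing of $\partial B_C$ with that stratum. A bit of care is needed to handle the smallest-scale contributions in $\cW_I$ (and, when $C$ is a leaf, in $\cW_{II}$): the same crossing-angle estimate, applied at the $E_1$-crossings of $\partial B_C$, delivers the desired $O(1)$ bound per crossing, while the largest scales $s \sim W_C$ are controlled trivially since only $O(1)$ such large squares fit in $B_C$.
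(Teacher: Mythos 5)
Your argument is essentially the same as the paper's for all four parts: (A) and (B) match the paper's reasoning, (C) is the same two-step ancestry-then-depth argument (the paper phrases it by directly showing $C_Q \subsetneq C_{Q'} \subset \pi(C_Q)$ rather than via WLOG on depth, but the content is identical), and the core of (D) is the same transversality observation — the width-$\delta$ annulus around $\partial B_C$ meets a width-$\delta$ horizontal strip at an angle bounded away from zero because $y_C^{(2)} \leq W_C \ll \kappa K_1 W_C = r_C$, giving $\lesssim \delta^2$ area and hence $O(1)$ Whitney squares per scale, after which the geometric series in $\delta^{2-p}$ closes the estimate.

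One small simplification worth noting in (D): rather than re-deriving the ``only $E_1$ matters near $\partial B_C$'' fact and then worrying separately about the $\cW_I$ and $\cW_{II}$ scales, you can cite Lemma \ref{lem:dist_bd} directly — it gives $\delta_Q \approx \Delta + \dist(Q,E_1)$ uniformly for every $Q \in \cW$, which immediately places $Q$ in a $K'\delta_Q$-neighborhood of the $x^{(1)}$-axis with no case split. This is exactly what the paper does, and it removes the ``a bit of care'' caveat from your write-up.
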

\begin{proof}
Let $Q \in \cW_{II}$. Recall that the points $z_Q, w_Q$ were introduced in Lemma \ref{lem: bpoint}. For $Q \in \cW_{II}$, $z_Q = z_{x_Q}$ and $w_Q = w_{x_Q}$. By Part 1 of Lemma \ref{lem: bpoint} and \eqref{eq: cz 4.6},
\[
\delta_Q \approx |z_Q - w_Q| \approx x_Q^{(2)}.
\]
Combining this with \eqref{eq: b01} gives $\delta_Q \approx W_{\{x_Q\}}$ for every $ Q \in \cW_{II}$. Thus, \ref{b3} implies that 
\[
\diam(B_{\{x_Q\}}) \approx \kappa \delta_Q \;\text{for every}\; Q \in \cW_{II}.
\]
We have by \ref{b1} that $x_Q \in \kappa^{-1} B_{\{x_Q\}}$. Also, $x_Q \in 1.1 Q$. Therefore, for $\kappa$ large enough, we deduce that $Q \subset B_{\{x_Q\}}$. This proves \textbf{(A)}.

We claim that for any $Q \in \cW$  we have
    \begin{equation}\label{eq: dq size}
    \frac{\Delta}{20} \le \delta_Q \le 2K_1 \kappa W_{C_Q}.
    \end{equation}
The lower bound on $\delta_Q$ follows from \eqref{eq: cz 1}. The upper bound is a consequence of the fact that $Q \subset B_{C_Q}$ and \ref{b3}.

    By inequality \eqref{eq: bdry 1}, any $Q \in \partial \cW$ satisfies $\delta_Q \ge 1$. By \eqref{eq: dq size} and the radial decay of the weights, any $Q \in \cW$ with $C_Q \in \cC_0$ satisfies $\delta_Q \le 2 K_1 \kappa \varepsilon$. Since $\varepsilon \le k_0 /N$, provided $k_0$ is small enough depending on $\kappa$ we deduce that $C_Q \notin \cC_0$ for any $Q \in \partial \cW$. Therefore, $C_Q = E_2$ for $Q \in \partial \cW$, proving \textbf{(B)}.

    We now prove \textbf{(C)}. Suppose that $Q, Q' \in \cW$ with $Q \touch Q'$ and $C_Q \ne C_{Q'}$. By \ref{b4}, we must have $C_Q \cap C_{Q'} \ne \emptyset$ and thus either $C_Q \subset C_{Q'}$ or $C_{Q'} \subset C_Q$. Without loss of generality, assume that $C_Q \in \cC_0$ and $C_Q \subset C_{Q'}$. By Property 2 of Lemma \ref{lem: cz}, we have $\delta_Q \approx \delta_{Q'}$. Combining this with \eqref{eq: dq size} and \ref{b3}, we have
    \[
    \delta_{Q'} \lesssim \kappa W_{C_Q} \approx \diam(B_{C_Q}).
    \]
    Since $Q \touch Q'$ and $Q \subset B_{C_Q}$, we deduce that $Q' \subset K B_{C_Q}$ for an absolute constant $K$. If $\kappa > K$, then $K B_{C_Q} \subset \kappa B_{C_Q} \subset B_{\pi(C_Q)}$ due to \ref{b2}. Thus, $Q' \subset B_{\pi(C_Q)}$ and so $C_{Q'} \subset \pi(C_Q)$. Thus, we have shown that $C_Q \subsetneq C_{Q'} \subset \pi(C_Q)$. Therefore, $C_{Q'} = \pi(C_Q)$. This proves \textbf{(C)}.

    We now prove \textbf{(D)}. Fix $C \in\cC_0$. We claim that
    \begin{equation}\label{eq: a0}
\#\{(Q,Q')\in\cQ_C: \delta_Q = \delta\} \lesssim 1 \;\text{for every}\; \delta > 0.
\end{equation}
 Suppose $(Q,Q') \in \cQ_C$ with $ \delta_{Q}= \delta$.  Because $Q \subset B_C$, we have $\delta = \delta_Q \leq \diam(B_C)$. Because $C_{Q'} = \pi(C)$, it holds that $Q' \not\subset B_C$. Since $Q \subset B_C$ and $Q\leftrightarrow Q'$, it follows that $Q, Q'$ are contained in a $K \delta_Q$-neighborhood of the boundary of $B_C$ for an absolute constant $K$. By Lemma \ref{lem:dist_bd}, it is also the case that $Q, Q'$ are contained in a $K' \delta_Q$ neighborhood of the $x^{(1)}$-axis for another absolute constant $K'$. Therefore,
 \[
 \begin{aligned}
 &(Q,Q') \in \cQ_C, \; \delta_Q = \delta \implies \\
 & Q \subset \{ x \in \R^2 : \dist(x,\partial B_C) \leq K \delta \} \cap \{x \in \R^2 : |x^{(2)}| \leq K' \delta \}.
 \end{aligned}
 \]
One can verify from \eqref{eq: yCprop}, \eqref{eq: bcdef} that the Lebesgue measure of the region 
\[
\Omega(C,\delta) = \{ x \in \R^2 : \dist(x,\partial B_C) \leq K \delta \} \cap \{x \in \R^2 : |x^{(2)}| \leq K' \delta \}
\]
is upper bounded by $K'' \delta^2$ for any $\delta \leq \diam(B_C) = 2\kappa K_1 W_C$, for an absolute constant $K''$, provided $\kappa$ is sufficiently large. A simple packing argument then yields that the number of dyadic cubes $Q$ contained in $\Omega(C,\delta)$ with $\delta_Q = \delta$ is $\lesssim 1$. Note also for fixed $Q \in \cW$ as above, the number of $Q' \in \cW$ with $Q \leftrightarrow Q'$ is $\lesssim 1$ (see Lemma \ref{lem: cz}). This completes the proof of \eqref{eq: a0}.

 Combining \eqref{eq: dq size} and \eqref{eq: a0}, and using that $2-p>0$, we see that
  \begin{equation*}
    \sum_{(Q, Q') \in \cQ_C} \delta_Q^{2-p} \le \sum_{\ell \le \log_2(2K_1\kappa W_C)} \sum_{(Q,Q') \in \cQ_C, \delta_Q = 2^\ell} 2^{\ell(2-p)} \lesssim_{p,\kappa} W_C^{2-p}.
 \end{equation*}    
This completes the proof of \textbf{(D)}.
\end{proof}

For the remainder of the article we fix $\kappa > 10$ to be a large enough constant so that we can apply Lemma \ref{lem: cluster}, and we assume that $k_0$ is sufficiently small so that the conclusion of Lemma \ref{lem: cluster} holds.

Recall that every $C \in \partial \cC$ is of the form $C=\{x_C\}$ for a unique $x_C \in E_2$, and recall that the points $z_x, w_x \in E_1$  for $x \in E_2$ were defined in Section \ref{sec: basepoint}. Note that the points $\{x,z_x,w_x\}$ in $\R^2$ are not colinear because $x \in E_2 \subset \R \times \{\Delta\}$ and $z_x,w_x \in E_1 \subset \R \times \{0\}$.
\begin{lem}\label{lem: ball est}
    For any $G \in L^{2,p}(\R^2)$ and $x \in E_2$, let $T_x(G)$ denote the unique affine polynomial satisfying
\begin{equation}\label{eq: jet def}
T_x(G)|_{\{x, z_x, w_x\}} = G|_{\{x, z_x, w_x\}}.
\end{equation} For any $G\in L^{2,p}(\R^2)$, the following holds:
    \begin{multline*}
        \sum_{C \in \cC_0}|(\partial_2 G)_{B_C} - (\partial_2 G)_{B_{\pi(C)}}|^p \cdot W_C^{2-p} + \sum_{ C \in \partial \cC} |\partial_2(T_{x_C}(G)) - (\partial_2 G)_{B_C}|^p\cdot W_C^{2-p} \\
        \lesssim_{p,N} ||G||_{L^{2,p}(\R^2)}^p.
    \end{multline*}
\end{lem}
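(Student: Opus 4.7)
My plan is to split the target sum into two pieces---a ``leaf sum'' over $\partial\cC$ and an ``interior sum'' over $\cC_0$---and to estimate each separately, both via Sobolev--Poincaré-type estimates combined with the geometric disjointness of the ball family $\{B_C\}$.

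For the leaf sum, fix $C \in \partial\cC$ with $C = \{x_C\}$. By Lemma~\ref{lem: bpoint} and \eqref{eq: cz 4.6}, the three points $\{x_C, z_{x_C}, w_{x_C}\}$ lie in the enlarged ball $K_0 B_C$, are non-collinear (since $x_C \in E_2$ has positive $y$-coordinate while $z_{x_C}, w_{x_C} \in E_1$ lie on the $x$-axis), and are mutually $\approx W_C$-separated. I would apply Lemma~\ref{lem: sobolev} on $\Omega = K_0 B_C$ to bound $|G(z) - T_{x_C, K_0 B_C}(G)(z)| \lesssim W_C^{2-2/p}\|G\|_{L^{2,p}(K_0 B_C)}$ for each such $z$. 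Since $T_{x_C}(G)$ interpolates $G$ exactly at these three non-degenerate points, standard affine-interpolation bounds yield $|\nabla T_{x_C}(G) - (\nabla G)_{K_0 B_C}| \lesssim W_C^{1-2/p}\|G\|_{L^{2,p}(K_0 B_C)}$, and a Poincaré estimate transfers $(\partial_2 G)_{K_0 B_C}$ to $(\partial_2 G)_{B_C}$ at the same cost. Raising to the $p$-th power and multiplying by $W_C^{2-p}$ gives $|\partial_2 T_{x_C}(G) - (\partial_2 G)_{B_C}|^p \cdot W_C^{2-p} \lesssim \|G\|_{L^{2,p}(K_0 B_C)}^p$; summing over $C \in \partial\cC$ and invoking the pairwise disjointness of $\{K_0 B_C\}_{C\in\partial\cC}$ (property~\ref{b5}) delivers the desired bound by $\|G\|_{L^{2,p}(\R^2)}^p$.

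For the interior sum, I would set $f = \partial_2 G$, so $|\nabla f| \le |\nabla^2 G| \in L^p(\R^2)$. The Sobolev--Poincaré inequality on $B_{\pi(C)}$ with conjugate exponent $p^* = 2p/(2-p)$, combined with Hölder on $B_C \subset B_{\pi(C)}$, yields the scale-invariant per-cluster estimate
\[
|(f)_{B_C} - (f)_{B_{\pi(C)}}|^p \cdot W_C^{2-p} \lesssim \int_{B_{\pi(C)}} |\nabla^2 G|^p.
\]
Summing this naively would produce a factor depending on the tree depth, since the parent balls overlap along ancestor chains. To avoid this, I would telescope $(f)_{B_C} - (f)_{B_{\pi(C)}}$ through a dyadic chain of concentric balls $B^{(j)}_C = B(y_C, 2^j r_C)$ for $0 \le j \le J_C \approx \log_2(W_{\pi(C)}/W_C)$, apply the per-step Poincaré bound $|(f)_{B^{(j)}_C} - (f)_{B^{(j+1)}_C}|^p \lesssim (2^j r_C)^{p-2}\int_{B^{(j+1)}_C}|\nabla^2 G|^p$, and use Minkowski's inequality to exchange the sum over clusters with the sum over absolute dyadic scale. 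The geometric factor $2^{j(p-2)/p}$ is summable for $p<2$ and produces the claimed bound.

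The key technical obstacle is the scale-wise overlap bound: verifying that at every absolute dyadic scale $\rho$, only $O_N(1)$ of the chain balls can cover any given point. This is the essential payoff of the hypothesis $\varepsilon \le k_0/N$---property~\ref{b4} forces cluster centers at comparable scales to be $\gtrsim \rho/N$-separated, and this separation propagates to the chain balls---so that at each scale the summation is controlled uniformly in the tree depth, yielding the bound $\lesssim_{p,N}\|G\|_{L^{2,p}(\R^2)}^p$.
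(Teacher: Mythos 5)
Your treatment of the leaf sum over $\partial\cC$ matches the paper's: the three interpolation points lie in $K_0B_C$ and are non-degenerate at scale $W_C$, Lemma~\ref{lem: sobolev} gives the pointwise estimate, and \ref{b5} gives disjointness. That part is fine.

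Your treatment of the interior sum has a genuine gap. You telescope through the concentric \emph{balls} $B_C^{(j)}=B(y_C,2^jr_C)$ and then sum the resulting local $L^p$ masses $\int_{B_C^{(j+1)}}|\nabla^2G|^p$ over all clusters and relative scales. But the family $\{B_C^{(j+1)}\}$ does \emph{not} have bounded overlap. Your claim that ``at every absolute dyadic scale $\rho$, only $O_N(1)$ of the chain balls cover a given point'' is true per scale, but you then need to sum over absolute scales $\rho$, and the decay factor $(\rho/r_C)^{p-2}$ is of size $\approx 1$ for the cluster $C$ whose chain is just starting at scale $\rho$; since there is such a cluster at every scale between $\dist(z,E_2)$ and $1$, the total multiplicity with which a point $z$ near $E_2$ appears is $\gtrsim \log(1/\dist(z,E_2))$, which is unbounded. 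Equivalently, at fixed relative scale $j$, the nested chain $C_0\supsetneq C_1\supsetneq\cdots$ of ancestors of a point near $E_2$ contributes one ball $B_{C_i}^{(j+1)}$ for every $i$, and these balls stack up: the overlap count is the local tree depth, not $O_N(1)$. Property~\ref{b4} only separates the $K_0B_C$ balls, which are at the \emph{bottom} scale $\approx W_C$ of each chain; it does not separate the chain balls at the larger scales where the stacking occurs. So neither your Minkowski step (fixing the relative scale $j$) nor the absolute-scale regrouping closes the argument.

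The paper avoids this with two devices you omit. First, instead of concentric balls it introduces a family of \emph{pairwise disjoint} annuli $A_C$ centered at $y_C$ with inner radius $r_C$ and outer radius $\approx_N W_{\pi(C)}$, contained in $\frac12 B_{\pi(C)}$; disjointness is arranged precisely so that each $A_C$ lives in the ``gap'' between $B_C$ and its descendants on one side and $B_{\pi(C)}$ on the other, so summing over $C$ costs nothing. Second, and crucially, the per-step Sobolev estimates (Lemma~\ref{lem: sobolev}) naturally produce integrals over balls/thick annuli rather than over the thin shells $A_C^{(\ell)}$; the paper converts these to integrals over the shells by choosing a sub-exponent $r=\frac{p+1}{2}<p$, dominating $\|G\|_{L^{2,r}(\Omega)}$ pointwise on the shell by the Hardy--Littlewood maximal function $\cM(|\nabla^2G|^r)$, and then, after summing over the disjoint $A_C$, invoking boundedness of $\cM$ on $L^{p/r}(\R^2)$ (which requires $p/r>1$, i.e.\ $r<p$). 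Without this maximal-function step you cannot localize the estimate to a disjoint family, and that is exactly where your proposed argument breaks down. If you want to salvage your approach, you would need to replace your per-step ball integrals by annular-shell integrals using $\cM$ in the same way, at which point you have essentially reproduced the paper's proof.
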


\begin{proof}
Let $C \in \partial \cC$. By \ref{b1} and \ref{b3} we have $x_C \in B_C$ and $\diam(B_C) \approx W_C$. For any $Q \in \cW_{II}$ with $x_Q = x_C$, we have $z_{x_C}, w_{x_C} \in K_0 Q$ by Lemma \ref{lem: bpoint}. By Part \ref{clust a} of Lemma \ref{lem: cluster}, we also have $Q \subset B_C$. Thus $z_{x_C}, w_{x_C} \in K_0 B_C$. By Lemma \ref{lem: sobolev}, we have
    \[
    |\partial_2(T_{x_C}(G)) - (\partial_2 G)_{B_C}|^p\cdot W_C^{2-p} \lesssim_p ||G||_{L^{2,p}(K_0B_C)}^p\;\text{for every}\; C \in \partial \cC.
    \]
    Thanks to \ref{b5}, the collection $\{K_0 B_C\}_{C \in \partial \cC}$ is pairwise disjoint. We conclude that
    \begin{equation}\label{eq: ball est 1}
    \sum_{ C \in \partial \cC} |\partial_2(T_{x_C}(G)) - (\partial_2 G)_{B_C}|^p\cdot W_C^{2-p}
        \lesssim_p ||G||_{L^{2,p}(\R^2)}^p.
    \end{equation}

    For $C \in \cC_0$, let $r_C$ denote the radius of the ball $B_C$ (i.e., $r_C = \kappa K_1 W_C$). Using \ref{b1}, \ref{b2}, \ref{b3}, and \ref{b4} we introduce a family of annuli $\{A_C\}_{C \in \cC_0}$ with the following properties:
    \begin{enumerate}
        \item $A_C$ is centered at $y_C$, has inner radius $r_C$, and has outer radius $10^{M_C+1}r_C$ for some integer $M_C 
        \geq 0$ such that $10^{M_C+1}r_C \approx_N W_{\pi(C)}$.
        \item $A_C \subset \frac{1}{2} B_{\pi(C)}$.
        \item The family $\{A_C\}_{C \in \cC_0}$ is pairwise disjoint.
    \end{enumerate}

    To define the annuli, observe by \eqref{eq: dis} that
    \begin{equation}\label{eq: dlb}
        \dist(C,C') \geq k N^{-1} (W_{\pi(C)} + W_{\pi(C')}) \mbox{ when } C,C' \in \cC_0, \; C \cap C' = \emptyset.
    \end{equation}
    for an absolute constant $k \in (0,1)$. We choose $M_C \geq 0$ to be the largest integer satisfying the inequality \begin{equation}\label{eq: MC}
        10^{M_C+1} r_C < (k/4) N^{-1} W_{\pi(C)}. 
    \end{equation}
    Recall that $r_C = \kappa K_1 W_C$. Therefore, the inequality admits a solution $M_C \geq 0$ provided $10 \kappa K_1 W_C < \frac{k}{4} N^{-1} W_{\pi(C)}$, which is satisfied provided $W_C < k' N^{-1} W_{\pi(C)}$ for an absolute constant $k'>0$. This is implied by the radial decay of the weights and the assumption that $\epsilon \leq k_0 N^{-1}$ for sufficiently small $k_0$. By the choice of $M_C$, $10^{M_C+1} r_C \approx_N W_{\pi(C)}$, verifying condition 1. 

Let $C \in \cC_0$. Observe that
\[
10^{M_C+1} r_C < (k/4) N^{-1} W_{\pi(C)} < (1/4) W_{\pi(C)} < (1/4)\kappa K_1 W_{\pi(C)} = r_{\pi(C)}/4.
 \]
According to \ref{b1}, and because $\kappa > 10$, we have 
\[
\diam(\pi(C)) \leq \kappa^{-1} \diam(B_{\pi(C)}) < \frac{1}{4} r_{\pi(C)}.
\]
Therefore,
\[
10^{M_C+1} r_C + \diam(\pi(C)) < r_{\pi(C)}/2.
\]
Since both $y_C,y_{\pi(C)} \in \pi(C)$, we have
\[
\begin{aligned}
A_C \subset B(y_C, 10^{M_C+1} r_C) &\subset B(y_{\pi(C)}, 10^{M_C+1} r_C + \diam(\pi(C))) \\
&\subset B(y_{\pi(C)},  r_{\pi(C)}/2) = (1/2)B_{\pi(C)},
\end{aligned}
\]
proving condition 2.
    
To verify condition 3, we fix $C, C' \in \cC_0$ with $C \neq C'$ and demonstrate that $A_C \cap A_{C'} = \emptyset$. Note that either $C \subset C'$, $C' \subset C$, or $C$ and $C'$ are disjoint. Suppose first  $C \subset C'$. Then also $C \subset \pi(C) \subset C'$, and according to condition 2, $A_C$ is contained in the interior of $B_{\pi(C)}$. Thanks to \ref{b2}, $B_{\pi(C)}\subset B_{C'}$, so that $A_C$ is contained in the interior of $B_{C'}$. Since $A_{C'}$ only intersects the boundary of $B_{C'}$, we conclude that $A_C \cap A_{C'} = \emptyset$. Similarly, $A_C \cap A_{C'} = \emptyset$ if $C' \subset C$. Finally, suppose $C \cap C' = \emptyset$. It follows from \eqref{eq: dlb}, \eqref{eq: MC} that
\[
B(y_C, 10^{M_C+1} r_C) \cap B(y_{C'}, 10^{M_{C'}+1} r_{C'}) = \emptyset.
\]
(Recall $y_C \in C$ and $y_{C'} \in C'$.) Hence, $A_C \cap A_{C'} = \emptyset$. This completes the proof of condition 3.

    For each $C \in \cC_0$ we define for each $0 \le \ell \le M_C$
    \[
    A_C^{(\ell)} = \{x \in \R^2 : 10^\ell r_C \le |x - y_C| \le 10^{\ell+1}r_C\}.
    \]
    Observe that
    \[
    A_C = \bigcup_{\ell=0}^{M_C} A_{C}^{(\ell)}.
    \]
    We define
    \[
    r = \frac{p+1}{2}
    \]
    and claim that for every $C\in \cC_0$ we have
\begin{equation}\label{eq: ball est 2}
    |(\partial_2 G)_{B_{\pi(C)}} - (\partial_2 G)_{B_C}|^p\cdot W_C^{2-p} \lesssim_{p,N} \| \cM( | \nabla^2 G|^r) \|_{L^{p/r}(A_C)}^{p/r}.
\end{equation}
Since the $A_C$ are pairwise disjoint, this implies that
\[
\sum_{C \in \cC_0} | (\partial_2 G)_{B_{\pi(C)}} - (\partial_2 G)_{B_C}|^p\cdot W_C^{2-p}\lesssim_{p,N} \| \cM(|\nabla^2 G|^r) \|_{L^{p/r}(\R^2)}^{p/r};
\]
we use the boundedness of the Hardy-Littlewood maximal operator from $L^{p/r}(\R^2)$ to $L^{p/r}(\R^2)$ to deduce that
\[
\sum_{C \in \cC_0} | (\partial_2 G)_{B_{\pi(C)}} - (\partial_2 G)_{B_C}|^p\cdot W_C^{2-p}\lesssim_{p,N} ||G||_{L^{2,p}(\R^2)}^p.
\]
Combining this with \eqref{eq: ball est 1} proves the lemma. We now prove \eqref{eq: ball est 2}.

Fix $C \in \cC_0$. By the Sobolev Inequality and the fact that $\diam(B_C) \approx \diam(A_C^{(0)}) \approx W_C$, we have
\begin{align*}
|(\partial_2 G)_{B_C} - (\partial_2 G)_{A_C^{(0)}}|  & \lesssim_p W_C^{1-2/r}\cdot ||G||_{L^{2,r}(B_C\cup A_{C}^{(0)})}\\
& \lesssim_p W_C \cdot (\cM(|\nabla^2 G|^r)(z))^{1/r}
\end{align*}
for any $z \in A_C^{(0)}$. Taking $p$-th powers and integrating over $A_C^{(0)}$ gives
\begin{equation}\label{eq: ann1}
|(\partial_2 G)_{B_C} - (\partial_2 G)_{A_C^{(0)}}|^p \cdot W_C^{2-p} \lesssim_p  ||\cM(|\nabla^2 G|^r)||_{L^{p/r}(A_{C}^{(0)})}^{p/r}.
\end{equation}

Similarly, we show that
\begin{equation}\label{eq: ann2}
\begin{aligned}
    |(\partial_2 G)_{B_{\pi(C)}} - (\partial_2 G)_{A_C^{(M_C)}}|^p & \cdot W_C^{2-p} \\
    & \lesssim_{p,N} 10^{M_C(p-2)}||\cM(|\nabla^2 G|^r)||_{L^{p/r}(A_C^{(M_C)})}^{p/r}
\end{aligned}
\end{equation}
and that for any $0 \le \ell < M_C$ we have
\begin{equation}\label{eq: ann3}
    |(\partial_2 G)_{A_C^{(\ell)}} - (\partial_2 G)_{A_C^{(\ell+1)}}|^p \cdot W_C^{2-p} \lesssim_p 10^{\ell(p-2)}||\cM(|\nabla^2 G|^r)||_{L^{p/r}(A_C^{(\ell)})}^{p/r}.
\end{equation}
For the first inequality above, we have used that $\diam(A_C^{(M_C)}) \approx 10^{M_C+1} r_C \approx_N W_{\pi(C)} \approx \diam(B_{\pi(C)})$ (see \eqref{b3}). We combine \eqref{eq: ann1}, \eqref{eq: ann2}, and \eqref{eq: ann3}, apply the triangle inequality and use that $2-p >0$ to get
\begin{align*}
|(\partial_2 G)_{B_C} - (\partial_2 G)_{B_{\pi(C)}}|^p\cdot W_C^{2-p} & \lesssim_{p,N} ||\cM(|\nabla^2 G|^r)||_{L^{p/r}(A_C)}^{p/r} \sum_{\ell = 0}^{M_C} 10^{\ell(p-2)}\\
& \lesssim_{p} ||\cM(|\nabla^2 G|^r)||_{L^{p/r}(A_C)}^{p/r}.
\end{align*}
This completes the proof of \eqref{eq: ball est 2}.

\end{proof}

\section{The extension operator for $L^{2,p}(\R^2)$}\label{sec: plane}

In this section, we assume the existence of a bounded linear extension operator $H: L^{1,p}(\partial V) \rightarrow L^{1,p}(V)$ as in Theorem \ref{thm: main}.

Recall from the previous section that the map $v \mapsto C_v$ is an isomorphism of the weighted trees $V$ and $\cC$. Therefore, via this isomorphism, $H$ induces a bounded linear extension operator $\cH: L^{1,p}(\partial \cC) \rightarrow L^{1,p}(\cC)$ satisfying
\[
||\cH||_{L^{1,p}(\partial \cC) \rightarrow L^{1,p}(\cC)}= ||H||_{L^{1,p}(\partial V) \rightarrow L^{1,p}(V)}.
\]
We will use $\cH$ to construct a bounded linear extension operator $T: L^{2,p}(E) \rightarrow L^{2,p}(\R^2)$ satisfying
\begin{equation}\label{eq: plane main}
||T||_{L^{2,p}(E)\rightarrow L^{2,p}(\R^2)} \lesssim_{p,N} ||\cH||_{L^{1,p}(\partial \cC)\rightarrow L^{1,p}(\cC)}.
\end{equation}
This proves one of the conditional statements in Theorem \ref{thm: main}; we prove the other in the next section.

We write $||\cH||:=||\cH||_{L^{1,p}(\partial \cC)\rightarrow L^{1,p}(\cC)}.$

Assume that we are given $f: E\rightarrow \R$. We will produce a function ${F \in L^{2,p}(Q^0)}$ satisfying:
\begin{enumerate}[label={\textbf{(F\arabic*)}},left=0pt]
    \item $F$ is determined linearly by the data $f$.\label{f1}
    \item $F|_E = f$.\label{f2}
    \item $||F||_{L^{2,p}(Q^0)} \lesssim_{p,N} ||\cH||\cdot ||G||_{L^{2,p}(\R^2)}$ for any $G \in L^{2,p}(\R^2)$ with $G|_E = f$.\label{f3}
\end{enumerate}
Once we produce such an $F$, it will be straightforward to extend it to a function defined on all of $\R^2$. Once we do this, we'll have constructed the operator $T$ introduced above.

The function $F$ has the form
\begin{equation}\label{eq: plane 1}
F(x) = \sum_{ Q \in \cW} P_Q(x) \theta_Q(x),
\end{equation}
where $\{\theta_Q\}_{Q \in \cW}$ is the partition of unity introduced in Section \ref{sec: decomp} and $\{P_Q\}_{Q \in \cW}$ is a family of affine polynomials, to be constructed in a moment.

First, recall that in Section \ref{sec: decomp} we associated to each $x \in E_2$ points $z_x, w_x \in E_1$ (see \eqref{eq: cz 4.6}. For every $x \in E_2$ we define $P_x$ to be the unique affine polynomial satisfying
\begin{equation}\label{eq: plane 5}
P_x|_{\{x, z_x, w_x\}} = f|_{\{x, z_x, w_x\}}.
\end{equation}

Recall that every $C \in \partial \cC$ is of the form $C = \{x_C\}$ for some $x_C \in E_2$. We can therefore define a function $\phi: \partial \cC \rightarrow \R$ by setting 
\begin{equation}\label{eq: plane 2}
\phi(C) = \partial_2 P_{x_C}\;\text{for } C \in \partial \cC.
\end{equation}
We now use the bounded linear extension operator $\cH$ to extend the function $\phi: \partial \cC \rightarrow \R$ to a function $\Phi: \cC \rightarrow \R$, i.e., we define
\[
\Phi(C) = \cH\phi(C) \;\text{for } C \in \cC.
\]

Recall that every $Q \in \cW$ is associated with
\begin{itemize}
    \item points $z_Q, w_Q \in K_0 Q \cap E_1$ satisfying $|z_Q - w_Q| \approx \delta_Q$ (see Lemma \ref{lem: bpoint}),
    \item a cluster $C_Q \in \cC$ (see Section \ref{sec: cluster}).
\end{itemize}
For every $Q \in \cW$, we define $L_Q$ to be the unique affine polynomial satisfying
\[
L_Q|_{\{z_Q, w_Q\}} = f|_{\{z_Q, w_Q\}}, \quad (\partial_2 L_Q) = 0.
\]
We are now ready to define the polynomials $P_Q$ introduced above.

We define (for $x = (x^{(1)}, x^{(2)}) \in \R^2$)
\begin{equation}\label{eq: pq def}
P_Q(x) = L_Q(x) + x^{(2)} \cdot \Phi(C_Q)\;\text{for } Q \in \cW.
\end{equation}
Now that we have defined the polynomials $P_Q$, our alleged interpolant $F$ is defined by \eqref{eq: plane 1}.

It is evident that $F$ satisfies condition \ref{f1}, thanks to the linearity of the operator $\cH$ and the definition of the polynomials $P_x$, $L_Q$.

By Lemma \ref{lem: bpoint}, for every $Q \in \cW_I$ the point $z_Q \in E_1$ is the unique point in  $1.1Q \cap E$. Since $E_1 \subset \R \times \{0\}$, it follows that
\[
P_Q|_{1.1Q \cap E} = L_Q|_{1.1Q \cap E} = f|_{1.1Q \cap E}\;\text{for every}\; Q \in \cW_I,
\]
and thus
\begin{equation*}
    F|_{E_1} = f|_{E_1}.
\end{equation*}

Let $Q \in \cW_{II}$ and recall that we write $x_Q$ to denote the unique point in $1.1Q \cap E_2$. By Lemma \ref{lem: bpoint}, we have in this case that $z_Q = z_{x_Q}$, $w_Q = w_{x_Q}$. Since $z_{x_Q}, w_{x_Q} \in E_1 \subset \R \times \{0\}$, we get
\[
P_Q|_{\{z_Q, w_Q\}} = f|_{\{z_Q, w_Q\}}= P_{x_Q}|_{\{z_Q, w_Q\}}.
\]
By Lemma \ref{lem: cluster}, we have $C_Q = \{x_Q\}\in \partial \cC$ for any $Q \in \cW_{II}$. Therefore
\[
\partial_2 P_Q = \Phi(C_Q) = \phi(C_Q) = \partial_2 P_{x_Q};
\]
it follows that
\begin{equation*}
P_Q = P_{x_Q}\; \text{for every } Q \in \cW_{II}.
\end{equation*}
Combining this with \eqref{eq: plane 5} gives
\[
P_Q|_{1.1Q \cap E} = P_{x_Q}|_{1.1Q \cap E} = f|_{1.1Q \cap E}\;\text{for every}\; Q \in \cW_{II},
\]
and thus
\[
F|_{E_2} = f|_{E_2}.
\]
We deduce that $F$ satisfies condition \ref{f2}.

We now prove that $F$ satisfies condition \ref{f3}. For any $G \in L^{2,p}(\R^2)$ with $G|_E = f$, we must show that
\begin{equation}\label{eq: plane 10}
||F||_{L^{2,p}(Q^0)}\lesssim_{p,N}  ||\cH||\cdot||G||_{L^{2,p}(\R^2)}.
\end{equation}

We fix such a $G$. By Lemma \ref{lem: patching}, we have
\[
||F||_{L^{2,p}(Q^0)}^p \lesssim_p \sum_{Q \leftrightarrow Q'} || P_Q - P_{Q'}||_{L^\infty(Q)}^p \delta_Q^{2-2p}.
\]
Combining this with the definition of $P_Q$ and using Lemma \ref{lem:dist_bd} gives
\begin{equation}\label{eq: plane 9}
\begin{split}
||F||&_{L^{2,p}(Q^0)}^p \\
&\lesssim_p \sum_{Q \leftrightarrow Q'} \Big\{ ||L_Q - L_{Q'}||_{L^\infty(Q)}^p \cdot \delta_Q^{2-2p} + | \Phi(C_Q) - \Phi(C_{Q'})|^p \cdot \delta_Q^{2-p}\Big\}.
\end{split}
\end{equation}

As in the previous section, we let
\[
r = \frac{p+1}{2}
\]
and claim that for any $Q, Q' \in \cW$ with $Q\touch Q'$ we have
\begin{equation}\label{eq: lq1.1}
\| L_Q - L_{Q'}\|_{L^\infty(Q)}^r \lesssim_p \delta_Q^{2r-2} \|G\|_{L^{2,r}(5 K_0 Q)}^r.
\end{equation}
This implies that
\[
\| L_Q - L_{Q'}\|_{L^\infty(Q)}^r \lesssim_p \delta_Q^{2r} \cM(|\nabla^2 G|^r)(z)\;\text{for any}\; z \in Q,
\]
where $\cM$ is the Hardy-Littlewood maximal operator (see Section \ref{sec: prelim}). Taking $(p/r)$-th powers and integrating gives
\[
\| L_Q - L_{Q'}\|_{L^\infty(Q)}^p \lesssim_p \delta_Q^{2p-2} \|\cM(|\nabla^2 G|^r)\|_{L^{p/r}(Q)}^{p/r}.
\]
By Property 3 of Lemma \ref{lem: cz}, we deduce that
\[
\sum_{Q \touch Q'} \|L_Q - L_{Q'}\|_{L^\infty(Q)}^p \delta_Q^{2-2p} \lesssim_p \sum_{Q \in \cW} \|\cM(|\nabla^2 G|^r)\|_{L^{p/r}(Q)}^{p/r}.
\]
Squares in $\cW$ have pairwise disjoint interiors (since $\cW$ is a partition of $Q^0$), and thus we have
\[
\sum_{Q \in \cW} \|\cM(|\nabla^2 G|^r)\|_{L^{p/r}(Q)}^{p/r} \le \| \cM(|\nabla^2 G|^r)\|_{L^{p/r}(\R^2)}^{p/r}.
\]
Because the Hardy-Littlewood maximal operator is bounded from $L^{q}(\R^2)$ to $L^q(\R^2)$ for $1 < q \le \infty$, we deduce that
\begin{equation}\label{eq: plane 12}
    \sum_{Q \touch Q'} ||L_Q - L_{Q'}||^p_{L^\infty(Q)}  \delta_Q^{2-2p} \lesssim_p ||G||_{L^{2,p}(\R^2)}^p\le ||\cH||^p\cdot ||G||_{L^{2,p}(\R^2)}^p.
\end{equation}
(The last inequality simply uses that $||\cH||\ge 1$.) We now prove \eqref{eq: lq1.1}.

Fix $Q, Q' \in \cW$ with $Q\touch Q'$ and observe that for any $x \in Q$ we have
\begin{equation*}
L_Q(x) - L_{Q'}(x) = f(z_Q) + (\nabla L_Q)\cdot (x-z_Q) - f(z_{Q'}) - (\nabla L_{Q'})\cdot (x- z_{Q'}).
\end{equation*}
Since $G(z_{Q'}) = f(z_{Q'})$, and since $z_Q^{(2)}= z_{Q'}^{(2)} = 0$, we have
\[
T_{z_{Q'},5K_0Q}(G)(z_Q) = f(z_{Q'}) + (\partial_1 G)_{5K_0Q}(z_Q^{(1)}-z_{Q'}^{(1)}).
\]
(See Section \ref{sec: prelim} for the definition of $T_{z_{Q'},5K_0Q}$.) By Lemma \ref{lem: cz}, we have $|x-z_Q|, |x-z_{Q'}|, |z_Q - z_{Q'}| \lesssim \delta_Q$ for $x \in Q$. Therefore, since $\partial_2 L_Q = \partial_2 L_{Q'}=0$, by the triangle inequality we have
\begin{equation}\label{eq: L1}
\begin{split}
    \|L_Q - L_{Q'}\|_{L^\infty(Q)} \le \; & | f(z_Q) - T_{z_{Q'}, 5K_0Q}(G)(z_Q)| \\ &+ |(\partial_1 G)_{5K_0Q} - \partial_1 L_Q|\cdot\delta_Q \\ 
    &+ |(\partial_1 G)_{5K_0Q} - \partial_1 L_{Q'}|\cdot \delta_Q.
\end{split}
\end{equation}
By Lemma \ref{lem: cz}, $\frac{1}{2}\delta_{Q'} \le \delta_Q \le 2 \delta_{Q'}$, and thus, $K_0Q' \subset 5K_0 Q$. In particular, 
\[
\{z_Q, w_Q, z_{Q'}, w_{Q'}\} \subset 5K_0 Q\cap E_1.
\]
Since $G|_E = f$,  Lemma \ref{lem: sobolev} implies
\begin{align*}
    &|f(z_Q) - T_{z_{Q'}, 5K_0 Q}(G)(z_Q)| \lesssim_p \delta_Q^{2-2/r}\|G\|_{L^{2,r}(5K_0 Q)},\\
    &|f(z_Q) - f(w_Q) - (\partial_1 G)_{5K_0 Q} \cdot(z_Q^{(1)} - w_Q^{(1)})| \lesssim_p \delta_Q^{2-2/r}\|G\|_{L^{2,r}(5K_0 Q)},\\
    & |f(z_{Q'}) - f(w_{Q'}) - (\partial_1 G)_{5K_0 Q}\cdot(z_{Q'}^{(1)} - w_{Q'}^{(1)})| \lesssim_p \delta_Q^{2-2/r}\|G\|_{L^{2,r}(5K_0 Q)}.
\end{align*}
By definition of $L_Q$ we have $\partial_1 L_Q = \frac{f(z_Q) - f(w_Q)}{z_Q^{(1)} - w_Q^{(1)}}$ and $\partial_1 L_{Q'} = \frac{f(z_{Q'}) - f(w_{Q'})}{z_{Q'}^{(1)} - w_{Q'}^{(1)}}$. Thus, by combining the previous inequality with \eqref{eq: L1} we deduce \eqref{eq: lq1.1}.

Next, we claim that
\begin{equation}\label{eq: plane 11}
\sum_{Q \touch Q'} |\Phi(C_Q) - \Phi(C_{Q'})|^p\cdot \delta_Q^{2-p} \lesssim_{p,N} ||\cH||^p\cdot ||G||_{L^{2,p}(\R^2)}^p.
\end{equation}
Combining this with \eqref{eq: plane 9} and \eqref{eq: plane 12} proves \eqref{eq: plane 10}, establishing \ref{f3}. We now prove \eqref{eq: plane 11}.

By Lemma \ref{lem: cluster}, if $Q \touch Q'$ and $C_Q \ne C_{Q'}$, then either $C_Q = \pi(C_{Q'})$ or $C_{Q'} = \pi(C_Q)$. Therefore,
\begin{equation}\label{eq: plane 3}
\sum_{Q \touch Q'}| \Phi(C_Q) - \Phi(C_{Q'})|^p \cdot \delta_Q^{2-p} \lesssim \sum_{ C \in \cC_0} | \Phi(C) - \Phi(\pi(C))|^p \sum_{(Q, Q') \in \cQ_C} \delta_Q^{2-p},
\end{equation}
where
\[
    \cQ_C = \{(Q,Q') \in \cW \times \cW : Q \leftrightarrow Q', C_Q = C, C_{Q'} = \pi(C)\}
\]
(as in Lemma \ref{lem: cluster}). Applying Lemma \ref{lem: cluster} gives
\[
\sum_{Q \touch Q'}| \Phi(C_Q) - \Phi(C_{Q'})|^p \cdot \delta_Q^{2-p} \lesssim_p \sum_{ C \in \cC_0} | \Phi(C) - \Phi(\pi(C))|^p \cdot W_C^{2-p}.
\]
Since $\cH$ is a bounded linear extension operator and $\Phi = \cH \phi$, we have that
\[
\sum_{C \in \cC_0}  |\Phi(C) - \Phi(\pi(C))|^p \cdot W_C^{2-p} \lesssim_p ||\cH||^p\cdot \sum_{C \in \cC_0 } |\Xi(C) - \Xi(\pi(C))|^p \cdot W_C^{2-p}
\]
for any $\Xi: \cC \rightarrow \R$ satisfying $\Xi|_{\partial \cC} = \phi$. Taking $\Xi(C) = (\partial_2 G)_{B_C}$ for $C \in \cC \backslash \partial \cC$, we use \eqref{eq: plane 2} and apply the triangle inequality to get
\begin{equation}\label{eq: plane 6}
\begin{split}
\sum_{C \in \cC_0} |\Phi(C) - \Phi(\pi(C))|^p \cdot W_C^{2-p}&\\ \lesssim_p  ||\cH||^p\cdot\bigg\{ &\sum_{C \in \cC_0 } |(\partial_2 G)_{B_C} - (\partial_2 G)_{B_{\pi(C)}}|^p \cdot W_C^{2-p} \\ &+ \sum_{C \in \partial\cC} |\partial_2 P_{x_C} - (\partial_2 G)_{B_C}|^p\cdot  W_C^{2-p} \bigg\}.
\end{split}
\end{equation}
Applying Lemma \ref{lem: ball est} (note that by \eqref{eq: plane 5}, \eqref{eq: jet def}, we have for every $x \in E_2$ that $P_x = T_x(G)$) gives
\begin{equation}\label{eq: plane 13}
\sum_{C \in \cC_0}  |\Phi(C) - \Phi(\pi(C))|^p \cdot W_C^{2-p} \lesssim_{p,N} ||\cH||^p\cdot||G||_{L^{2,p}(\R^2)}^p;
\end{equation}
we deduce \eqref{eq: plane 11}.

We have thus constructed $F\in L^{2,p}(Q^0)$ and shown that it satisfies \ref{f1}--\ref{f3}. It remains to extend $F$ to a function on all of $\R^2$.

Recall the set of boundary squares
\[
\partial \cW = \{Q \in \cW : 1.1 Q \cap \partial Q^0 \ne \emptyset\},
\]
introduced in Section \ref{sec: decomp}.

By Lemma \ref{lem: bpoint}, there exist points $z_0, w_0$ so that
\[
z_Q = z_0, \; w_Q = w_0\; \text{for all}\; Q \in \partial \cW.
\]
We define $L_0$ to be the unique affine polynomial satisfying
\[
L_0|_{\{z_0, w_0\}} = f|_{\{z_0, w_0\}} \quad \text{and}\quad \partial_2 L_0 = 0.
\]
We then have
\[
L_Q = L_0\; \text{for all}\; Q \in \partial \cW.
\]
Similarly, by Lemma \ref{lem: cluster} we have for every $Q \in \partial \cW$ that $C_Q = E_2$ and thus
\[
\Phi(C_Q) = \Phi(E_2)\;\text{for all}\; Q \in \partial \cW.
\]
Invoking \eqref{eq: pq def}, we see that
\[
P_Q = L_0 + x^{(2)}\cdot \Phi(E_2)\;\text{for all} \; Q \in \partial \cW.
\]
We define
\[
\tilde{F}(x) = \begin{cases}
F(x) & \text{if } x \in Q^0,\\
L_0(x) + x^{(2)}\cdot\Phi(E_2) &\text{if } x \notin Q^0.
\end{cases}
\]
Recall (see \eqref{eq: plane 1}) that $F$ is of the form
\[
F = \sum_{Q\in\cW} P_Q \theta_Q,
\]
with $\supp(\theta_Q) \subset 1.1Q$, and therefore $\tilde{F} \in C^2(\R^2)$. Clearly, then, $||\tilde{F}||_{L^{2,p}(\R^2)} = ||F||_{L^{2,p}(Q^0)}$ and $\tilde{F}|_E = F|_E = f$. Moreover, $\tilde{F}$ depends linearly on $f$ thanks to \ref{f1}, the definition of $L_0$, and the definition of $\Phi$.

For any $f \in L^{2,p}(E)$, we define
\[
(Tf)(x) = \tilde{F}(x) \;(x \in \R^2).
\]
Then $T: L^{2,p}(E) \rightarrow L^{2,p}(\R^2)$ is a bounded linear extension operator satisfying \eqref{eq: plane main}.

\section{The extension operator for $L^{1,p}(V)$}\label{sec: tree}

In this section, we assume the existence of a bounded linear extension operator $T: L^{2,p}(E) \rightarrow L^{2,p}(\R^2)$ as in Theorem \ref{thm: main}. Using $T$, we construct a bounded linear extension operator $\cH : L^{1,p}(\partial \cC) \rightarrow L^{1,p}(\cC)$ satisfying
\begin{equation}\label{eq: 7main}
||\cH||_{L^{1,p}(\partial \cC) \rightarrow L^{1,p}(\cC)} \lesssim_{p,N} ||T||_{L^{2,p}(E)\rightarrow L^{2,p}(\R^2)}.
\end{equation}
As in the previous section, we use that $V$ and $\cC$ are isomorphic to observe that $\cH$ induces a bounded linear extension operator $H: L^{1,p}(\partial V) \rightarrow L^{1,p}(V)$ satisfying
\[
||H||_{L^{1,p}(\partial V) \rightarrow L^{1,p}(V)} = ||\cH||_{L^{1,p}(\partial \cC) \rightarrow L^{1,p}(\cC)}.
\]
Combined with the results of the previous section, this proves Theorem \ref{thm: main}.

We write $||T|| := ||T||_{L^{2,p}(E)\rightarrow L^{2,p}(\R^2)}$.

Suppose that we are given a function $\phi: \partial \cC \rightarrow \R$. We must then construct a function $\Phi: \cC \rightarrow \R$ satisfying:
\begin{enumerate}[label={\textbf{($\Phi$\arabic*)}}, left=0pt]
    \item $\Phi$ is determined linearly by the data $\phi$,\label{phi1}
    \item $\Phi|_{\partial \cC} = \phi$,\label{phi2}
    \item $||\Phi||_{L^{1,p}(\cC)} \lesssim_{p,N} ||T||\cdot ||\Xi||_{L^{1,p}(\cC)}$ for any $\Xi :\cC\rightarrow \R$ with $\Xi|_{\partial \cC} = \phi$.\label{phi3}
\end{enumerate}
Once we've constructed such a $\Phi$, we define
\[
\cH \phi (C) = \Phi(C)\;\text{for all} \; C \in \cC,
\]
establishing \eqref{eq: 7main}. We prepare to construct the function $\Phi$.

First, recall that for every $x \in E_2$ we have $\{x\} \in \partial \cC$. We can thus define a function $f: E \rightarrow \R$ by setting
\[
f(x) = \begin{cases}
    0 &\text{if } x \in E_1,\\
    \phi(\{x\})\cdot W_{\{x\}} &\text{if } x \in E_2.
\end{cases}
\]
We then apply the extension operator $T$ to $f$ and obtain a function $F \in L^{2,p}(\R^2)$, i.e., we define
\begin{equation}\label{eq: F def}
F(x) = Tf(x)\;\text{for } x\in \R^2.
\end{equation}
We now define the function $\Phi: \cC \rightarrow \R$ by
\[
\Phi(C) = \begin{cases}
    \phi(C) &\text{if } C \in \partial \cC,\\
    (\partial_2 F)_{B_C} &\text{if } C \in \cC \backslash \partial \cC.
\end{cases}
\]
Property \ref{phi2} is an immediate consequence of the definition of $\Phi$, and Property \ref{phi1} follows easily from the linearity of $T$ and the definition of $f$. It remains to prove that $\Phi$ satisfies \ref{phi3}.

Observe that
\begin{align*}
||\Phi||_{L^{1,p}(\cC)}^p = & \sum_{C \in \partial \cC} | \phi(C) - (\partial_2 F)_{B_{\pi(C)}}|^p\cdot W_C^{2-p} \\
&+ \sum_{ C \in \cC_0 \backslash \partial \cC} | (\partial_2 F)_{B_C} - (\partial_2 F)_{B_{\pi(C)}}|^p\cdot W_C^{2-p}.
\end{align*}
Applying the triangle inequality gives
\begin{align*}
||\Phi||_{L^{1,p}(\cC)}^p =& \sum_{C \in \cC_0} | (\partial_2 F)_{B_C} - (\partial_2 F)_{B_{\pi(C)}}|^p \cdot W_C^{2-p} \\
&+ \sum_{ C\in \partial \cC} |\phi(C) - (\partial_2 F)_{B_{C}}|^p\cdot W_C^{2-p}.
\end{align*}
Observe that $\phi(C) = \partial_2 (T_{x_C}(F))$ for $C = \{x_C\} \in \partial C$ (see \eqref{eq: jet def} and the definition of $f$); we thus apply Lemma \ref{lem: ball est} to deduce that 
\[
||\Phi||_{L^{1,p}(\cC)}^p \lesssim_{p,N} ||F||_{L^{2,p}(\R^2)}^p.
\]
Since $T$ is a bounded linear extension operator, we have by \eqref{eq: F def} that
\[
||\Phi||_{L^{1,p}(\cC)}^p \lesssim_{p,N} ||T||^p\cdot ||G||_{L^{2,p}(\R^2)}^p
\]
for any $G \in L^{2,p}(\R^2)$ satisfying $G|_E = f$.

We now assume that we are given $\Xi : \cC \rightarrow \R$ satisfying $\Xi |_{\partial \cC} = \phi$. In a moment, we will define a function $\widetilde{G} \in L^{2,p}(\R^2)$ satisfying $\widetilde{G}|_E = f$ and
\begin{equation}\label{eq: tree 3}
||\widetilde{G}||_{L^{2,p}(\R^2)}^p \lesssim_p || \Xi ||_{L^{1,p}(\cC)}^p.
\end{equation}
Once we establish this estimate, we will have shown that $\Phi$ satisfies Property \ref{phi3}.

We define the function $\widetilde{G}$ introduced above using the Whitney Decomposition $\cW$ of $E$ (see Section \ref{sec: decomp}). First, we will define local affine interpolants $P_Q$ for each $Q \in \cW$ and set
\[
G = \sum_{Q \in \cW} P_Q \theta_Q \quad \mbox{on } Q^0,
\]
where $\theta_Q$ is the partition of unity introduced in Section \ref{sec: decomp}. 

We now define the $P_Q$. For each $C \in \cC$, we define the affine polynomial
\[
P_C(x^{(1)}, x^{(2)}) = x^{(2)}\cdot \Xi(C).
\]
Recall that in Section \ref{sec: cluster}, we associated to every $Q \in \cW$ a cluster $C_Q \in \cC$. We define
\[
P_Q = P_{C_Q} \;\text{for every}\; Q \in \cW.
\]

Since $P_Q|_{E_1} = 0$ for every $Q \in \cW$, we clearly have $G|_{E_1} = f|_{E_1} = 0$. Moreover, recall that for every $Q \in \cW_{II}$ we have $C_Q = \{x_Q\}\in \partial \cC$, where $x_Q = (x_Q^{(1)}, W_{\{x_Q\}})$ is the unique point contained in $1.1Q \cap E_2$.  Thus,
\[
P_Q(x_Q) = \phi(\{x_Q\})\cdot W_{\{x_Q\}} = f(x_Q)\;\text{for every}\; Q \in \cW_{II},
\]
and so $G|_{E_2} = f|_{E_2}$. Therefore, $G|_E = f$, as claimed. Applying Lemma \ref{lem: patching} to $G$ (and using Lemma \ref{lem:dist_bd}) gives
\begin{align*}
||G||_{L^{2,p}(Q^0)}^p \lesssim_p \sum_{Q \leftrightarrow Q'} |\Xi(C_Q) - \Xi(C_{Q'})|^p \delta_Q^{2-p}.
\end{align*}
By Lemma \ref{lem: cluster}, if $Q \touch Q'$ and $C_Q \ne C_{Q'}$, then we either have $C_Q = \pi(C_{Q'})$ or $C_{Q'} = \pi(C_Q)$. Thus
\[
\sum_{Q \leftrightarrow Q'} |\Xi(C_Q) - \Xi(C_{Q'})|^p \delta_Q^{2-p} \lesssim \sum_{ C \in \cC_0} | \Xi(C) - \Xi(\pi(C))|^p \sum_{(Q, Q') \in \cQ_C} \delta_Q^{2-p},
\]
where $\cQ_C$ is defined in Lemma \ref{lem: cluster}. Applying Lemma \ref{lem: cluster} establishes that $\| G \|_{L^{2,p}(Q^0)} \lesssim_p \| \Xi \|_{L^{1,p}(\cC)}$. We then extend $G$ to a function $\widetilde{G}$ on all of $\R^2$ satisfying $\widetilde{G}|_{Q^0} = G|_{Q^0}$ and $\| \widetilde{G} \|_{L^{2,p}(\R^2)} \lesssim_p \| G \|_{L^{2,p}(Q^0)}$. Then $G|_E = f$, and we have established \eqref{eq: tree 3}, completing the proof of \ref{phi3}.

\bibliography{ref}
\bibliographystyle{plain}

\end{document}